\newtheorem{thmA}{Theorem}
\newtheorem{corA}[thmA]{Corollary}
\newtheorem{theorem}{Theorem}[section]
\newtheorem{prop}[theorem]{Proposition}
\newtheorem{proposition}[theorem]{Proposition}
\newtheorem{lemma}[theorem]{Lemma}
\newtheorem{corollary}[theorem] {Corollary}
\newtheorem{question}{Question}
\newtheorem*{claim*}{Claim}
\theoremstyle{remark}
\newtheorem{remark}[theorem]{Remark}
\theoremstyle{definition}
\newtheorem{definition}[theorem]{Definition}
\def\Z{\mathbb Z}
\def\stab{{\rm{Stab}}}
\def\aut{{\rm{Aut}}}
\def\out{{\rm{Out}}}
\def\gln{{\rm{GL}}(n, \Z)}
\def\gl{{\rm{GL}}}
\def\hom{{\rm{Hom}}}
\def\cd{{\rm{cd}}}
\def\<{\langle}
\def\>{\rangle}
\newcommand{\st}{\mathrm{st}}
\newcommand{\lk}{\mathrm{lk}}
\newcommand{\rar}{\rightarrow}
\def\co{\colon}
\def\ostar{\mathring\st}
\newcommand{\under}{\triangleleft}
\newcommand{\FS}{\mathcal{FS}_N}
\newcommand{\FSr}{{\mathcal{FS}^r_N}}
\newcommand{\X}{\mathcal{X}_N}
\newcommand{\Xr}{{\mathcal{X}_N^r}}
\newcommand{\CV}{\mathcal{CV}_N}
\newcommand{\CVr}{\mathcal{CV}^r_N}
\newcommand{\J}{\mathcal{J}_N}
\newcommand{\dJ}{\partial\J}
\newcommand{\dFr}{{\partial\mathcal{FS}_N^r}}
\newcommand{\dF}{\partial\mathcal{FS}_N}
\newcommand{\Lm}{\mathcal{L}_N}
\newcommand{\dL}{\partial\mathcal{L}_N}
\newcommand{\K}{\mathcal{K}_N}
\newcommand{\dK}{\partial\mathcal{K}_N}
\newcommand{\mcg}{{{\mathrm{MCG}}(\Sigma)}}
\newcommand{\cc}{\mathcal{C}(\Sigma)}
\newcommand{\vc}{^\textnormal{vc}}
\newcommand{\lf}{\textnormal{lf}}
\newcommand{\fund}[1]{\bar{#1}}		
\newcommand{\shchain}{\eta}			
\newcommand{\intcochain}{\psi}		
\newcommand{\smallsimplex}{\sigma}	
\newcommand{\hei}{\textnormal{ht}}
\DeclareMathOperator{\im}{im}
\newcommand{\Ind}{\textnormal{Ind}}
\newcommand{\less}[1]{ _<#1}
\newcommand{\lesseq}[1]{ _\leq #1}
\title[CM Complexes, Duality Groups, and the dualizing module of ${\rm{Out}}(F_N)$]{Cohen--Macaulay Complexes, Duality Groups, and the dualizing module of ${\rm{Out}}(F_N)$}
\author{Richard D. Wade and Thomas A. Wasserman}   
\begin{document}

\begin{abstract} 
We explain how Cohen--Macaulay classifying spaces are ubiquitous among discrete groups that satisfy Bieri--Eckmann duality, and compare Bieri--Eckmann duality to duality results for Cohen--Macaulay complexes. We use this comparison to give a description of the dualizing module of ${\rm{Out}}(F_N)$ in terms of the \emph{local cohomology cosheaf} of the spine of Outer space.
\end{abstract}

\address{}
\email{}

\maketitle
\section{Introduction}

\subsection{Duality groups}

 A group $G$ is a \emph{(Bieri--Eckmann) duality group} of dimension $n$ if there exists a $G$-module $D$ called the \emph{dualizing module} and a \emph{fundamental class} $c \in H_n(G;D)$ such that for every integer $k$ and $G$--module $A$ there is an isomorphism
 \[c \cap - \colon H^k(G;A) \xrightarrow{\cong} H_{n-k}(G;A \otimes D), \]
induced by the cap product. This notion, introduced in a seminal paper of Bieri and Eckmann \cite{BE}, is a group-theoretic generalization of Poincar\'e duality. Indeed, fundamental groups of closed, aspherical manifolds are duality groups, and in this case the dualizing module $D=\mathbb{Z}$ is isomorphic to the orientation module of the fundamental group. However, duality groups form a much broader family, and are particularly prominent in low-dimensional topology and geometric group theory. Arithmetic lattices, mapping class groups, and outer automorphism groups of free groups are all, up to finite-index, duality groups \cite{BS, Harer, BF}.

 Suppose that $G$ acts freely and cocompactly on a contractible CW--complex $X$. Then the compactly supported cohomology $H_c^*(X,\mathbb{Z})$ of $X$ is isomorphic to $H^*(G,\mathbb{Z}G)$ (as a graded $G$-module).  A theorem of Bieri and Eckmann states that $G$ is a duality group of dimension $n$ if and only if $H_c^*(X,\mathbb{Z})\cong H^*(G,\mathbb{Z}G)$ is torsion-free and concentrated in degree $*=n$ (\cite[Theorem~4.5]{BE}). Furthermore, in the above situation the dualizing module satisfies $D\cong H_c^n(X,\mathbb{Z})\cong H^n(G;\Z G)$. As compactly supported cohomology is only invariant up to proper homotopy, it is desirable to work with \emph{compact} classifying spaces (equivalently, cocompact $\mathrm{EG}$s) when studying duality groups.

\subsubsection{Duality of the mapping class group via Lefschetz duality} For $\gl(N,\mathbb{Z})$ and mapping class groups, the dualizing module can be described in terms of the homology of a particularly nice $G$-complex. For $\gl(N, \mathbb{Z})$ this $G$-complex is the rational Tits building \cite{BS}, whereas for mapping class groups the dualizing module is isomorphic to the the homology of the curve complex $\mathcal{C}(\Sigma)$ \cite{Harer}. Via Bieri--Eckmann duality, such concrete descriptions of dualizing modules have enabled a wealth of cohomological calculations \cite{CFP2,CP,CFP1,MPP,PS,BMPSW,BPS}.

One goal of this article is to explain why we should not expect such a clean description for $\out(F_N)$, so for context we will take a short detour to sketch Harer's proof \cite{Harer} that the dualizing module  $D \cong H_*(\mathcal{C}(\Sigma))$ when $G=\mcg$ (see also \cite{Broaddus, Ivanov}). In short, the proof combines Lefschetz duality with a good understanding of the topology of $\mathcal{C}(\Sigma)$.

To give more details, let $\Sigma$ be a closed, orientable surface of genus $g \geq 2$. Its Teichm\"uller space $\mathcal{T}$ is homeomorphic to an open ball of dimension $6g-6$. This is not cocompact under the action of $\mcg$. However, one can pass to the \emph{thick part} $X \subset \mathcal{T}$ formed by cutting out all \emph{thin regions} $T_c$ consisting of marked surfaces where a simple closed curve $c$ is of length $<\epsilon$. These regions are contractible, and if $\epsilon$ is chosen small enough, regions overlap if and only if their defining curves are disjoint. One can use this to show that $X$ is a contractible manifold with boundary $\partial X \simeq \mathcal{C}(\Sigma)$. Harer proves that $\cc$ is homotopy equivalent to a wedge of spheres of dimension $2g-2$, which allows him to apply Lefschetz duality to see that $H^*_c(X;\mathbb{Z})\cong H_{6g-6-*}(X,\partial X;\mathbb{Z})\cong H_{6g-7-*}(\partial X ;\mathbb{Z})$ is a torsion-free group concentrated in $*=4g-5$. This gives an elegant description of the dualizing module of $\mcg$ in terms of the homology of the curve complex.

\subsubsection{The failure of Harer's approach for $\out(F_N)$}
It is at this point one begins to see the difficulty in proving analogous results for outer automorphism groups of free groups. The analogous object to $\mathcal{T}$, which is Culler--Vogtmann's Outer space $\CV$, is not a manifold, so one does not have access to Lefschetz  duality. Furthermore, although a common bordification of $\CV$ is at the heart of the duality proofs of both \cite{BF} and \cite{BSV}, the homotopy type of its boundary is unknown (see \cite{BG} for a detailed discussion). 

However, Vogtmann \cite{Vogtmann} proved that Outer space does satisfy a strong local property called the \emph{Cohen--Macaulay (CM)} property, which implies that even though $\CV$ does not satisfy Lefschetz duality, it does satisfy \emph{CM duality}. This is our second main topic of discussion.

\subsection{Combinatorial (co)sheaves, the Cohen--Macaulay property, and CM duality}

Let $X$ be a locally finite simplicial complex. We blur the destinction between $X$ and its associated poset of simplices, ordered by inclusion. A \emph{combinatorial sheaf} $\mathcal{F}$ is a covariant functor from $X$ to the category of abelian groups. A \emph{combinatorial cosheaf} $\mathcal{G}$ is a contravariant functor from $X$ to the category of abelian groups. In each of these cases, each simplex is assigned an abelian group, and there are compatible maps between these groups `going up' (with respect to inclusion of simplices) in the case of a sheaf, and `going down' in the case of a cosheaf. There is a natural way of taking (compactly supported) cohomology with coefficients in a combinatorial sheaf, and taking (locally finite) homology with coefficients in a combinatorial cosheaf. Importantly, taking local homology at simplices (in the classical sense) gives a combinatorial sheaf on $X$, and similarly local cohomology forms a combinatorial cosheaf. 

A locally finite simplicial complex is a \emph{homology CM complex} of dimension $n$ if the reduced homology $\tilde{H}_*(X;\mathbb{Z})$ is concentrated in degree $n$ and the local homology at each point is concentrated in degree $n$. If only the local condition holds, then $X$ is a \emph{local homology CM complex}. In this case, every maximal simplex will be of dimension $n$, and there is a \emph{fundamental class} $[X] \in H^{\textnormal{lf}}_n(X;h^n)$ that lives in the top-dimensional locally finite homology with coefficients in the local cohomology cosheaf $h^n$ (this is described in Section~\ref{s:sheaf_stuff}). Poincar\'e duality for non-compact manifolds splits into two statements: one can either take compactly supported cohomology and regular homology, or instead use cohomology and locally finite (or Borel--Moore) homology. In the case of duality for locally CM spaces, a further split happens where one either takes twisted coefficients with the local homology sheaf $h_n$ on the left-hand side, or the local cohomology cosheaf on the right-hand side:

\begin{thmA}[CM Duality, \cite{WW1}] \label{t:duality} Let $X$ be a locally finite, $n$-dimensional, local homology CM complex. Then there are isomorphisms 
\begin{align*}
H_c^k(X;h_n) &\to H_{n-k}(X;\mathbb{Z}) \\
H^k(X;h_n) &\to H^{\lf}_{n-k}(X;\mathbb{Z}) \\
H_c^k(X;\mathbb{Z}) &\to H_{n-k}(X;h^n) \\
H^k(X;\mathbb{Z}) &\to H^{\lf}_{n-k}(X;h^n)
\end{align*} induced by the cap product with the fundamental class $[X] \in H_n^\lf(X;h^n)$.
\end{thmA}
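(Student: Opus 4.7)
The strategy is to prove all four statements simultaneously through a single chain-level computation: construct an explicit cycle representative for $[X]$, define the cap product as a chain map in each of the four variants, and show it is a quasi-isomorphism using the CM hypothesis.

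First I would fix simplicial chain models. For a combinatorial sheaf $\F$ and cosheaf $\G$ on $X$, the relevant complexes are $C^k(X;\F) = \prod_{\sigma \in X^{(k)}} \F(\sigma)$ and $C_c^k(X;\F) = \bigoplus_{\sigma \in X^{(k)}} \F(\sigma)$, together with $C_k(X;\G) = \bigoplus_{\sigma \in X^{(k)}} \G(\sigma)$ and $C_k^\lf(X;\G) = \prod_{\sigma \in X^{(k)}} \G(\sigma)$, all with differentials that combine simplicial face maps with the structure maps of the (co)sheaf. For each maximal simplex $\sigma$ the local cohomology $h^n(\sigma) = H^n_\sigma(X;\Z)$ carries a canonical tautological generator; assembling these over all top simplices produces an explicit element of $C_n^\lf(X;h^n)$, and the local CM condition (forcing local cohomology to vanish outside degree $n$ on every open star) makes this into a cycle representing $[X]$.

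Next I would define the cap product $[X] \cap -$ at the chain level via an Alexander--Whitney formula: given a $k$-cochain $\phi$ and a top simplex $\sigma$, pair the value of $\phi$ on a $k$-dimensional front face of $\sigma$ with the $h^n$-coefficient of $[X]$ at $\sigma$, and record the residual $(n-k)$-face via the cosheaf structure maps of the target. With $\Z$-coefficients on the source this directly produces the maps in the third and fourth lines (depending on whether the source uses $\bigoplus$ or $\prod$); with $h_n$-coefficients, the canonical evaluation pairing $h_n(\sigma) \otimes h^n(\sigma) \to \Z$ at each top simplex allows the cap product to land in $\Z$, producing the maps in the first and second lines. Verifying each of these is a chain map is routine once the face-map/structure-map compatibilities have been set up correctly.

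The main obstacle is showing that each of the four chain maps is a quasi-isomorphism. My plan is to filter both source and target by the simplicial skeleton and compare the resulting bicomplexes through $[X] \cap -$. The CM hypothesis is tailor-made for this: it forces local (co)homology to be concentrated in degree $n$ on every open star, so the $E_2$ pages of the two filtration spectral sequences are concentrated in a single row and are identified term-by-term by the tautological duality between $h_n(\sigma)$ and $h^n(\sigma)$ at each simplex. Once collapse and agreement on $E_2$ are established, all four dualities follow simultaneously from the same computation, with the $\bigoplus$ vs.\ $\prod$ bookkeeping distinguishing compactly supported from locally finite variants, and the $h_n$ vs.\ $\Z$ bookkeeping on the source distinguishing the two pairs of statements.
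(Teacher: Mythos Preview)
The paper does not prove this theorem here: it is quoted from the companion paper \cite{WW1}, and Section~\ref{s:sheaf_stuff} only records the chain models, the construction of the fundamental class, and the Alexander--Whitney cap product formula~\eqref{cupversion2}. Your setup for the complexes, the fundamental class, and the cap product matches that summary (modulo a harmless front/back face swap relative to~\eqref{cupversion2}), so on those points there is nothing to compare.

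The substantive gap is in your final paragraph. Filtering the source and target complexes by the simplicial skeleton and comparing through $[X]\cap-$ does not yield a useful spectral sequence: the cap product already sends $k$-cochains to $(n-k)$-chains on the nose, so the skeletal filtration is just the filtration by degree, the associated spectral sequences collapse immediately to the original complexes, and the comparison map on $E_2$ is exactly the map you are trying to prove is an isomorphism. There is no place for the CM hypothesis to enter. The way CM is actually used is through a \emph{single} double complex, for example $D_{p,q}=\bigoplus_{\sigma\in X_p}C^\sigma_q(X)$ with one differential the simplicial coboundary in $p$ and the other the local boundary in $q$: one of its two spectral sequences degenerates to $H^*(X;h_n)$ because CM concentrates the vertical homology in a single row, and the other degenerates to $H_*(X;\Z)$ because the face poset below each $\tau$ is contractible. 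That argument does not route through the cap product, so identifying the resulting isomorphism with $[X]\cap-$ is a separate (nontrivial) check. Your outline conflates the existence of \emph{some} isomorphism with the claim that \emph{this particular cap product} realises it.
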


The proof of Theorem~\ref{t:duality} is given in \cite{WW1}, where we also discuss its relationship to related results, such as Verdier duality. For this introduction, we will limit ourselves to highlighting four salient features: the construction does not require working over a field, does not require passing to a derived category, is simplicial (hence amenable to explicit calculation), and is given by an explicit cap product. 

There is a stronger, homotopical version of the CM condition, which will take a smaller role in this paper.\footnote{The Cohen--Macaulay condition for simplicial complexes originates in Reisner's theorem in combinatorial topology, which shows that the face ring of a finite simplicial complex is Cohen--Macaulay if and only if the complex is homology CM (see Bj\"orner's survey for more on this topic \cite{Bjoerner}). Quillen's 1978 article \cite{Quillen} was influential in introducing the CM condition to geometric topologists. In it, he used a homotopical condition on links, stating: \emph{`we use the stronger definition so that our theorems are in their best form.'} Since Quillen's article, the stronger homotopy CM condition has been mostly used without comment in geometric topology. For the majority of our theorems to be in their best form, we will need to work with homology CM complexes.}

\subsection{New descriptions of dualizing modules}

If $G$ is a duality group with a locally CM classifying space, CM duality can be applied to give alternative descriptions of dualizing modules. In \cite{WW1}, we show that the cap product in CM duality is equivariant with respect to the $G$-action, therefore:

\begin{corA}
\label{c:homological_D}
If $G$ is a duality group of dimension $d$ and $G$ admits a free, cocompact action on a contractible, locally finite, homology CM complex $X$ of dimension $n$, then the dualizing module is $G$-equivariantly isomorphic to $H_{n-d}(X;h^n)$, where $h^n$ is the degree $n$ local cohomology cosheaf of $X$.
\end{corA}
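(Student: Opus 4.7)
The plan is to chain together Bieri--Eckmann's characterization of the dualizing module with the appropriate case of CM duality, and then invoke equivariance of the relevant cap product.

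First, since $G$ acts freely and cocompactly on the contractible complex $X$, the quotient $X/G$ is a compact classifying space, and the Bieri--Eckmann machinery recalled in Section~1.1 gives a $G$-equivariant isomorphism $H^*(G;\Z G) \cong H_c^*(X;\Z)$ of graded $G$-modules. Because $G$ is a duality group of dimension $d$, the left-hand side is torsion-free and concentrated in degree $d$, and by definition it is the dualizing module, so as $G$-modules
\[ D \;\cong\; H_c^d(X;\Z). \]

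Second, since $X$ is a locally finite, $n$-dimensional homology CM complex, it is in particular local homology CM, so Theorem~\ref{t:duality} applies. Specializing the third isomorphism there to $k=d$ gives
\[ H_c^d(X;\Z) \xrightarrow{\;\cong\;} H_{n-d}(X;h^n), \]
induced by cap product with the fundamental class $[X] \in H_n^{\lf}(X;h^n)$. Composing the two isomorphisms yields $D \cong H_{n-d}(X;h^n)$.

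Finally, to upgrade this to a $G$-\emph{equivariant} isomorphism, I would cite the equivariance of the CM-duality cap product established in \cite{WW1} and quoted immediately before the statement of the corollary; combined with the standard equivariance of the Bieri--Eckmann identification $H^*(G;\Z G) \cong H_c^*(X;\Z)$, this gives the desired $G$-equivariant isomorphism. The proof is therefore essentially a composition of two canonical isomorphisms, and the only non-formal ingredient is the equivariance of the CM-duality cap product, which is the main technical input and is already handed to us by the companion paper.
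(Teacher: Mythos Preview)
Your proposal is correct and follows essentially the same route as the paper: the corollary is obtained by composing the Bieri--Eckmann identification $D\cong H_c^d(X;\Z)$ with the third CM-duality isomorphism of Theorem~\ref{t:duality}, and then citing the $G$-equivariance of the cap product from \cite{WW1}. The paper is simply terser, deriving the corollary directly from the sentence ``the cap product in CM duality is equivariant with respect to the $G$-action,'' but the content is the same.
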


This gives a homological description of the dualizing module of $G$, at the cost of using more complicated coefficients. However, this alternative description may be more amenable to calculation. 

If the space $X$ has the same dimension as $G$, then there is a  further possible description of the dualizing module in terms of \emph{global section group} $\Gamma(X)$ of the local homology sheaf. Let $\hom_c(\Gamma(X),\mathbb{Z})$ be the group of \emph{compactly determined} dual homomorphisms $f\colon \Gamma(X) \to \mathbb{Z}$. (A homomorphism $f$ is compactly determined if there exists a compact set $K$ such that $f(s)=0$ for every section $s$ that vanishes on $K$.) We say that \emph{the local homology sheaf on $X$ is semistable} if the inverse system $\{\Gamma(K)\}_K$ of sections of the local homology sheaf over finite subcomplexes satisfies the Mittag-Leffler condition (see Section~\ref{s:semistable}). Recall that while dualizing modules are always torsion-free, they are not necessarily free as abelian groups (see also Question~\ref{q:Peter}).

\begin{thmA}\label{t:compactly_sup_sections}
Let $G$, $X$ be as in Corollary~\ref{c:homological_D}. Furthermore suppose that $\cd(G)=\dim(X)$. Then the local homology sheaf on $X$ is semistable if and only if the dualizing module $D$ of $G$ is free abelian. If this is the case then \[ D \cong \hom_c(\Gamma(X),\mathbb{Z}),\] where $\Gamma(X)$ is the group of global sections of the local homology sheaf of $X$.
\end{thmA}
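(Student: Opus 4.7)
The plan is to pass the identification of $D$ from Corollary~\ref{c:homological_D} through CM duality in order to rewrite it as a direct limit of ordinary abelian-group duals, and then match that direct limit with the compactly determined dual $\hom_c(\Gamma(X),\mathbb{Z})$ under the semistability hypothesis.

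First I use $D \cong H^n(G;\mathbb{Z}G) \cong H^n_c(X;\mathbb{Z})$ and expand compactly supported cohomology as a direct limit
\[ D \;\cong\; H^n_c(X;\mathbb{Z}) \;\cong\; \varinjlim_{K} H^n(X, X\setminus K;\mathbb{Z}), \]
with $K$ running over finite subcomplexes. For each such $K$, the local CM property forces $H_*(X, X\setminus K;\mathbb{Z})$ to be concentrated in degree $n$, and a standard identification (of the sort developed in \cite{WW1}) realises this top group as the section group $\Gamma(K)$ of the local homology sheaf $h_n$. The vanishing of lower relative homology collapses the universal coefficient theorem to $H^n(X, X\setminus K;\mathbb{Z}) \cong \hom(\Gamma(K),\mathbb{Z})$, so
\[ D \;\cong\; \varinjlim_{K}\hom(\Gamma(K),\mathbb{Z}), \]
naturally in $G$.

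The remainder is purely algebraic. A class $[f_K] \in \varinjlim_{K}\hom(\Gamma(K),\mathbb{Z})$ pulls back along $\Gamma(X) = \varprojlim_K \Gamma(K) \to \Gamma(K)$ to a homomorphism that vanishes on every section restricting to zero on $K$, and hence defines a natural map
\[ \varinjlim_{K}\hom(\Gamma(K),\mathbb{Z}) \;\longrightarrow\; \hom_c(\Gamma(X),\mathbb{Z}). \]
Tracing definitions, injectivity reduces to the assertion that a map killed by $\Gamma(X)\to\Gamma(K)$ already vanishes on the image of some $\Gamma(K')\to\Gamma(K)$, and surjectivity reduces to extending a compactly determined hom from the image of $\Gamma(X)\to\Gamma(K)$ up to some $\Gamma(K')$; both conditions are precisely the Mittag-Leffler property for $\{\Gamma(K)\}$. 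So under semistability the displayed direct limit is exactly $\hom_c(\Gamma(X),\mathbb{Z})$.

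Finally, to link semistability with freeness of $D$, note that when ML holds the direct limit identifies with an ascending union of finitely generated free abelian groups $\hom(I_K,\mathbb{Z})$, where $I_K\subseteq\Gamma(K)$ is the eventually stable image of $\Gamma(X)\to\Gamma(K)$; the transition maps are dual to surjections $I_{K'}\twoheadrightarrow I_K$, giving pure inclusions, and Pontryagin's criterion for countable torsion-free abelian groups then yields that $D$ is free. A failure of ML on the other hand introduces the kind of divisibility phenomenon that makes $\varinjlim(\mathbb{Z}\xrightarrow{p}\mathbb{Z}\xrightarrow{p}\cdots)\cong\mathbb{Z}[1/p]$ fail to be free, obstructing freeness of $D$. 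The main obstacle is this last translation: proving cleanly that Mittag-Leffler for $\{\Gamma(K)\}$ is \emph{necessary} (not merely sufficient) for freeness of $D$, and carefully handling the non-surjectivity of the transition maps in the inverse system throughout.
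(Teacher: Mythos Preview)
Your route is genuinely different from the paper's. The paper's argument is a two-line reduction: CM duality (Theorem~\ref{t:duality}) gives $D\cong H_0(X;h^n)$ when $\dim X=\cd(G)$, and then Corollary~3.19 of \cite{WW1} (which in turn rests on Geoghegan's work on semistability) supplies exactly the statement that $H_0(X;h^n)$ is free abelian if and only if the local homology sheaf is semistable, together with the identification $H_0(X;h^n)\cong\hom_c(\Gamma(X),\mathbb{Z})$ in that case. The paper does not reprove any of this; it simply quotes it.

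What you have written is, in effect, an attempt to reprove that quoted corollary from first principles, by expanding $H^n_c(X;\mathbb{Z})$ as $\varinjlim_K \hom(\Gamma(K),\mathbb{Z})$ and then analysing that direct system. This is a reasonable and instructive strategy, and the forward direction (ML $\Rightarrow$ $D$ free, via pure unions and Pontryagin's criterion) is essentially correct. But there are two places where your sketch is incomplete. First, the identifications $H_\ast(X,X\setminus K;\mathbb{Z})$ concentrated in degree $n$ and $H_n(X,X\setminus K;\mathbb{Z})\cong\Gamma(K)$ are not immediate consequences of the local CM condition as stated; they need an inductive or spectral-sequence argument over the simplices of $K$ (this is part of what is developed in \cite{WW1}). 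Second, and more seriously, you correctly flag that the converse (freeness of $D$ $\Rightarrow$ ML) is not established: the $\mathbb{Z}[1/p]$ heuristic is suggestive, but showing that \emph{every} failure of ML in a countable inverse system of finitely generated free abelian groups forces infinite divisibility in the dual colimit is precisely the nontrivial input from Geoghegan that the paper is outsourcing. Without that, you have only one implication.
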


Corollary~\ref{c:homological_D} and Theorem~\ref{t:compactly_sup_sections} show that the dualizing module in these situations can be read off from properties of both the local homology sheaf and the local cohomology cosheaf. The work surrounding Theorem~\ref{t:compactly_sup_sections} is influenced by, and uses parts of, Geoghegan's work on semistability in \cite[Chapters 12-13]{Geoghegan}.

\subsection{Outer space and $\out(F_N)$}

Let $\CV$ and $\CVr$ be Outer space and its reduced version (consisting of graphs with no separating edges). As we briefly mentioned earlier, there are two proofs that $\out(F_N)$ is a duality group.  Both either directly (in \cite{BSV}) or indirectly (in \cite{BF}) use the topology of a bordification of $\CVr$ called \emph{Jewel space} $\J$. This is a polyhedral complex sitting inside $\CVr$ whose interior is homeomorphic to $\CVr$.

\begin{figure}[h]
	\begin{tikzcd}[column sep=large]
	*\simeq	\Xr \arrow[r,hook,"\simeq","\mbox{\begin{tiny}proper\end{tiny}}"']& \J \arrow[r,hook,"\simeq"] & \CVr \arrow[r,hook,"\simeq"] & \FSr\\
																			& \dJ \arrow[u,phantom,"\mbox{\rotatebox{90}{$\subset$}}"] \arrow[rr, "\simeq"]&&\dFr \arrow[u,phantom,"\mbox{\rotatebox{90}{$\subset$}}"]
	\end{tikzcd}
	\caption{The spine, Jewel space, Outer space, and its simplicial completion are all contractible. $\out(F_N)$ acts properly and cococompactly on $\Xr$ and $\J$, but does not act cocompactly on $\CVr$ and does not act properly on $\FSr$. The boundary of Jewel space is homotopy equivalent to $\dFr$, although this homotopy equivalence is also not proper; $\dJ$ is locally finite, whereas $\dFr$ is not.}
\end{figure}

Bux--Smillie--Vogtmann \cite{BSV} showed that $\J$ is homeomorphic to the bordification of Outer space used by Bestvina--Feighn in \cite{BF}. Vogtmann \cite{Vogtmann2} showed that $\dJ$ is homotopy equivalent to the boundary of the simplicial completion of reduced Outer space, denoted by $\dFr$ (equivalently, this space is the boundary of the free splitting complex). Jewel space can be seen as a thickening of the reduced spine $\Xr \subset \CVr$. Importantly, unlike the action on $\CV$, the action of $\out(F_N)$ on both $\J$ and $\Xr$ is cocompact.

In analogy with the mapping class group picture, $\dJ$ plays a similar role to the curve complex, and it is natural to ask about the relationship between the dualizing module of $\out(F_N)$ and the homology of $\dJ$. 

\begin{thmA} \label{t:out}
Let $\X$ be the spine of Outer space and let $\Xr$ be the spine of reduced Outer space. The dualizing module of $\out(F_N)$ is isomorphic to the groups \begin{align*} H_0(\X,h^{2N-3})&\cong H^{2N-3}_c(\X,\mathbb{Z}) , \text{ and}\\
H_0(\Xr,h_r^{2N-3}) &\cong H^{2N-3}_c(\Xr,\mathbb{Z}) , \end{align*} where $h^{2N-3}$ and $h_r^{2N-3}$ are the degree $2N-3$ local cohomology cosheaves of $\X$ and $\Xr$, respectively. In contrast, the homology of the simplicial boundary of (reduced) Outer space satisfies
\begin{align*} H_{3N-5-k}(\dF;\mathbb{Z}) &\cong H^k_c(\X;h_*(\CV)|_{\X}),\text{ and}\\ H_{3N-5-k}(\dFr;\mathbb{Z}) &\cong H^k_c(\X;h_*(\CVr)|_{\Xr}), \end{align*} where $h^*(\CV)|_{\X}$ and and $h^*(\CVr)|_{\Xr}$  are the restrictions of the local homology sheaf of Outer space (and reduced Outer space) to the spine (and its reduced version, respectively).
\end{thmA}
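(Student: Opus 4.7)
The plan splits according to the two parts. For Part 1, the identifications follow essentially immediately from Corollary~\ref{c:homological_D} and CM duality. Vogtmann established that both $\X$ and $\Xr$ are Cohen--Macaulay simplicial complexes of dimension $2N-3 = \vcd(\out(F_N))$, each acted on cocompactly by $\out(F_N)$. Passing to a torsion-free finite-index subgroup $G_0 \leq \out(F_N)$ (supplied for instance by the kernel of the map to $\gl(N,\Z/3\Z)$), the group $G_0$ acts freely and cocompactly on $\X$ (respectively $\Xr$), and $\cd(G_0)$ equals the dimension. Corollary~\ref{c:homological_D} then yields $D_{G_0}\cong H_0(\X; h^{2N-3})$; since the dualizing module of a virtually torsion-free duality group agrees as an abstract abelian group with that of any torsion-free finite-index subgroup and the $\out(F_N)$-module structure on $H_0(\X; h^{2N-3})$ is already ambient, this is also the dualizing module of $\out(F_N)$. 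The identification with $H^{2N-3}_c(\X;\Z)$ is the $k=n$ case of the third row of Theorem~\ref{t:duality}, and the reduced version is verbatim.

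For Part 2, the plan is to combine Vogtmann's homotopy equivalence $\dJ\simeq\dF$ with a Lefschetz-type duality on Jewel space. First, the equivalence identifies $H_{3N-5-k}(\dF;\Z)$ with $H_{3N-5-k}(\dJ;\Z)$. Because $\J$ is contractible, the long exact sequence of the pair $(\J,\dJ)$ in ordinary homology gives $H_{3N-5-k}(\dJ;\Z)\cong H_{3N-4-k}(\J,\dJ;\Z)$ for $k<3N-5$, with a reduced modification at the endpoint. Next I would apply a Lefschetz-style duality tailored to the $(3N-4)$-dimensional locally CM polyhedron $\J$ with boundary $\dJ$, converting this relative homology into compactly supported cohomology of the interior $\J\setminus\dJ$ with coefficients in the local homology sheaf. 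Because $\J\setminus\dJ$ is an open subspace of $\CV$ (respectively $\CVr$) with the same local topology, this sheaf coincides with $h_*(\CV)$ there. Finally, the proper deformation retract $\X\hookrightarrow\J\setminus\dJ$ transfers compactly supported cohomology to give $H^k_c(\X; h_*(\CV)|_\X)$; the reduced case proceeds identically.

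The principal obstacle is the Lefschetz-type step: one must verify that $\J$ is locally homology CM of dimension $3N-4$, identify the correct boundary stratification, and formulate and prove the Lefschetz--CM duality pairing $H^\lf$ of the pair $(\J,\dJ)$ against compactly supported cohomology of the interior twisted by the local homology sheaf. A secondary subtlety is that the equivalence $\dJ\simeq\dF$ is not proper, so it may only be used for ordinary homology; fortunately, all the duality and sheaf-theoretic computations happen on the interior $\J\setminus\dJ\simeq\CV\simeq\X$, where the proper homotopy equivalence with the spine is available and the restriction of $h_*(\CV)$ to $\X$ is well behaved.
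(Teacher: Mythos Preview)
Your Part~1 matches the paper exactly. For Part~2, the overall strategy is also the paper's: pass to a contractible $(3N-4)$-dimensional locally CM thickening with boundary homotopy equivalent to the simplicial boundary, use the long exact sequence of the pair, and apply a relative Lefschetz-type duality to land in compactly supported sheaf cohomology of the spine. The paper resolves your acknowledged ``principal obstacle'' not by working with the polyhedral Jewel space directly but by introducing purely simplicial thickenings $\Lm\subset\FS'$ and $\K\subset\FSr'$ of the spine inside the barycentric subdivision, proving these are homotopy CM of dimension $3N-4$ (Theorem~\ref{t:Jewel_space_is_CM}, using Vogtmann's link analysis), and then invoking the already-established relative CM duality of Theorem~\ref{t:dualitytheorem} with $L=\X'$ and $L\vc=\dL$. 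This keeps everything inside the simplicial framework of \cite{WW1} and avoids formulating any new Lefschetz statement for polyhedral complexes. The boundary identification comes from Br\"uck--Gupta (Theorem~\ref{t:bg}: $\dL\simeq\dF$ and $\dK\simeq\dFr$), rather than from Vogtmann's theorem.

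One slip to flag: Jewel space $\J$ lives in \emph{reduced} Outer space, its interior retracts to $\Xr$ (not $\X$), and Vogtmann's result is $\dJ\simeq\dFr$, not $\dJ\simeq\dF$. So your outline as written, routed through $\J$, only delivers the reduced statements; the unreduced ones need a separate thickening inside $\FS'$, which is precisely why the paper constructs $\Lm$ in addition to $\K$.
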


Given that $\dJ \simeq \dFr$ \cite[Proposition~2.14]{Vogtmann2}, this theorem shows that the difference between the homology of $\dJ$ and the dualizing module is the difference between taking compactly supported cohomology of $\Xr$ with coefficients in $h_*(\CVr)|_{\Xr}$ and $\mathbb{Z}$. Unfortunately there is no simple `universal coefficient theorem' in this setting: such a result would be desirable as it would allow us to make this relationship between the homology of $\dJ$ and the dualizing module precise. Theorem~\ref{t:out} also provides a homological description of the dualizing module, at the additional cost of using coefficients in the local cohomology cosheaf.

Using Vogtmann's proof that the (reduced) spine is homotopy CM, in Theorem~\ref{t:Jewel_space_is_CM} we show that a natural thickening of the (reduced) spine is homotopy CM. Using work of Br\"uck--Gupta \cite{BG}, we then show that the boundary of the thickened spine is homotopy equivalent to the boundary of the free splitting complex. With these results in hand, Theorem~\ref{t:out} is a consequence of CM duality. The first part follows from the fact that the spine is locally CM \cite{Vogtmann} and the third isomorphism in Theorem~\ref{t:duality}. The second part follows from a relative version of CM duality. 

\subsection{A bound for the top-dimensional cohomology of $\out(F_N)$}

Bieri--Eckmann duality allows one to translate problems from high-dimensional cohomology to low-dimensional homology, at the cost of understanding the dualizing module. If $G$ is the mapping class group or $\gln$ and $d$ is the virtual cohomological dimension of $G$, then $H^d(G;\mathbb{Q})=0$. This is a result of Church--Farb--Putman \cite{CFP2} in the case of the mapping class group, and Lee and Szczarba in the case of $\gln$ \cite{LS}, and is obtained by a vanishing result for the coinvariants of the associated dualizing module. In the case of $G=\gln$, the rational cohomology of $G$ also vanishes in codimensions one and two \cite{CP,BMPSW}. The analogous codimension-one vanishing result was shown not to hold in the mapping class group in work of Chan--Galatius--Payne \cite{CGP}. 

Infamously, one cannot prove any such vanishing result for $\out(F_N)$: the vcd of $\out(F_7)$ is 11 and Bartholdi \cite{Bartholdi} showed that $H^{11}(\out(F_7);\mathbb{Q}) \cong \mathbb{Q}$. The proof is computer-based, and it is not clear if there is a geometric origin for this cohomology class. While we cannot solve this problem, the methods in this paper allow us to obtain a resolution of the dualizing module of $\out(F_N)$ via the chain complex for $H_*(\Xr,h^{2N-3}_r)$. We describe this in Section~\ref{s:bound}, and use it to prove the following:

\begin{thmA}
Let $\rho \in \Xr$ be a rose in the spine of reduced outer space, and let $\stab(\rho) <\out(F_N)$ be the stabilizer of this rose. Then \[ \dim(H^{2N-3}(\out(F_N);\mathbb{Q})) \leq \dim((h^{2N-3}(\rho)\otimes \mathbb{Q})_{\stab(\rho)}),\]
where $h^{2N-3}(\rho)$ is the local cohomology of the reduced spine at this rose.
\end{thmA}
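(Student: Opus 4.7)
The plan is to combine Bieri--Eckmann duality with the description of the dualizing module from Theorem~\ref{t:out}, and then to refine the resulting presentation of top cohomology so that only the rose orbit contributes in degree zero. Since $\out(F_N)$ is a virtual duality group of vcd $2N-3$, duality (applied rationally via transfer from a torsion-free finite-index subgroup) identifies
\[
H^{2N-3}(\out(F_N);\mathbb{Q}) \;\cong\; (D \otimes \mathbb{Q})_{\out(F_N)},
\]
where $D$ is the rational dualizing module. By Theorem~\ref{t:out}, $D \cong H_0(\Xr; h_r^{2N-3})$, and in fact the full chain complex $C_\bullet(\Xr; h_r^{2N-3})$ is acyclic in positive degrees (by CM duality combined with contractibility of $\Xr$). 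Since $\out(F_N)$ acts on $\Xr$ with finite stabilizers, each $C_p \otimes \mathbb{Q}$ is a direct sum of modules induced from finite subgroups of $\out(F_N)$, hence flat over $\mathbb{Q}[\out(F_N)]$, making $C_\bullet\otimes\mathbb{Q}$ a flat resolution of $D\otimes \mathbb{Q}$.

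Applying $(-)_{\out(F_N)}$ termwise yields $(D\otimes\mathbb{Q})_{\out(F_N)} = H_0\bigl((C_\bullet\otimes\mathbb{Q})_{\out(F_N)}\bigr)$. By Shapiro's lemma and cocompactness of the action,
\[
(C_0\otimes \mathbb{Q})_{\out(F_N)} \;\cong\; \bigoplus_{[v]} (h^{2N-3}(v)\otimes\mathbb{Q})_{\stab(v)},
\]
a finite direct sum over the $\out(F_N)$-orbits of vertices (reduced marked graphs) in $\Xr$. Since $\out(F_N)$ acts transitively on roses, these contribute the single summand $(h^{2N-3}(\rho)\otimes\mathbb{Q})_{\stab(\rho)}$ appearing in the theorem. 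Right-exactness already gives an a priori bound on $\dim H^{2N-3}(\out(F_N);\mathbb{Q})$ by the total dimension of the above sum; the task is to absorb the non-rose summands into the image of the edge boundary $(C_1\otimes\mathbb{Q})_{\out(F_N)} \to (C_0\otimes\mathbb{Q})_{\out(F_N)}$.

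The plan for this reduction is to construct an $\out(F_N)$-equivariant discrete Morse matching on $\Xr$ that pairs every non-rose vertex $v$ with an incident edge $[v', v]$, where $v'$ is obtained from $v$ by collapsing a canonically chosen edge whose collapse preserves reducedness. Equivariant Morse theory for combinatorial cosheaves then produces a quasi-isomorphic Morse subcomplex whose degree-zero part is supported only on roses, and the claimed bound follows after coinvariants. The main obstacle is producing the matching equivariantly: the chosen collapse for each $v$ must stay within $\Xr$ (no separating edge is created) and must be $\stab(v)$-equivariant, and the global matching must be acyclic. A canonical rule on edge orbits of $v$ under $\aut(v)$, leveraging the fact that every non-rose reduced graph admits at least one reducedness-preserving collapse, should supply the matching, with verification of acyclicity and compatibility with the cosheaf boundary forming the technical core of the argument.
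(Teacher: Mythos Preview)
Your setup through the resolution $C_\bullet(\Xr;h_r^{2N-3})\twoheadrightarrow D$ and the Shapiro identification $(C_0\otimes\mathbb{Q})_{\out(F_N)}\cong\bigoplus_{[v]}(h^{2N-3}(v)\otimes\mathbb{Q})_{\stab(v)}$ is correct and matches the paper. The divergence is in how you propose to eliminate the non-rose summands, and here the Morse-matching plan has a genuine obstruction.

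An $\out(F_N)$-equivariant discrete Morse matching requires, for each non-rose vertex $v$, a \emph{single} incident edge $[v',v]$ fixed by $\stab(v)$. This already fails in rank $2$: the theta graph $\Theta$ has three edges, its stabilizer in $\out(F_2)$ permutes them transitively, and the three $1$-simplices of $\Xr$ joining $\Theta$ to roses are permuted accordingly. There is no $\stab(\Theta)$-invariant edge to pair with $\Theta$, so no equivariant matching exists. A ``canonical rule on edge orbits'' cannot repair this, because a Morse matching pairs individual cells, not orbits, and here the stabilizer of any candidate edge is strictly smaller than $\stab(\Theta)$. (One could try algebraic Morse theory on the coinvariant complex instead, but then you must verify that the relevant component of the boundary is an isomorphism of $\stab$-coinvariants, which is a different and nontrivial computation.)

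The paper sidesteps all of this by exploiting the explicit chain-level cap product rather than cancellation. With the poset orientation on $\Xr$, the initial vertex $\alpha_0$ of every top-dimensional simplex $\alpha$ is a rose (roses are the minimal elements). The cap product with the fundamental class sends $\alpha^*\in C^{2N-3}_c(\Xr;\mathbb{Z})$ to a class in $h^{2N-3}(\alpha_0)\subset C_0(\Xr;h^{2N-3})$, so its image lies in the rose submodule $R=\bigoplus_{\rho\text{ rose}}h^{2N-3}(\rho)$. Since this map induces the CM-duality isomorphism on (co)homology, every class in $D$ already has a representative in $R$; hence $D$ is a quotient of $R$, and passing to coinvariants (with $R=\Ind_{\stab(\rho)}^{\out(F_N)}h^{2N-3}(\rho)$ and Shapiro) gives a surjection $(h^{2N-3}(\rho))_{\stab(\rho)}\twoheadrightarrow D_{\out(F_N)}$, yielding the bound. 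No matching, acyclicity check, or rationality is needed for this step.
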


A detailed investigation of $h^{2N-3}(\rho)$ as a $\stab(\rho)$-representation is a topic for future research.

\subsection{The CM condition does not imply Bieri--Eckmann duality}

$\out(F_N)$ is not the only example of a duality group with a CM classifying space: in fact, CM complexes are ubiquitous among classifying spaces of duality groups. For instance, arithmetic groups and mapping class groups of surfaces are fundamental groups of orbifolds, and Brady and Meier proved that a right-angled Artin group (RAAG) is a duality group if and only if its associated Salvetti complex is a local homology CM complex \cite[Theorem~C and Lemma~5.1]{BM}. From this, one might hope that every fundamental group of a finite, closed  (i.e. no free faces), aspherical, local homology CM complex is a duality group. However, we will see that this is not the case.

\subsubsection{An example of Atanasov}One does not need to leave the world of RAAGs to find an aspherical CM complex $Y$ such that $\pi_1(Y)$ is not a duality group. Atanasov \cite{Atanasov} studied the two dimensional situation, and found an aspherical local-homotopy-CM complex (see Figure~\ref{fig:Atanasov}) with fundamental group $\mathbb{Z}^2 \ast \mathbb{Z}^2$ which is not a duality group. 

Using a version of CM duality, Atanasov \cite{Atanasov} gave sufficient conditions for fundamental groups of acyclic 2-complexes to be duality groups, although they require the singular parts of the 2-complex to be quite well-behaved (for instance a generic one relator group is a 2-dimensional duality group, but its presentation complex is not covered by Atanasov's criterion). 

\begin{figure}[ht]
    \centering
    \includegraphics{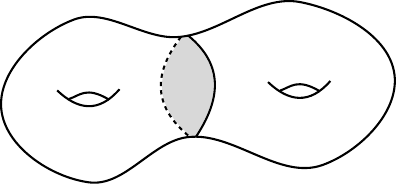}
    \caption{    The group $G=\mathbb{Z}^2  \ast \mathbb{Z}^2$ is not a duality group but is the fundamental group of an aspherical local homotopy CM complex $Y$ which has no free faces. The space $Y$ is given by gluing two tori $T_{1,1}$ each with one boundary to a disc.}
    \label{fig:Atanasov}
\end{figure}

\subsubsection{The CM property and visible irreducibility}

Atanasov's complex $Y$ given in Figure~\ref{fig:Atanasov} satisfies slightly stronger conditions than having no free faces: the complex is \emph{visibly irreducible} in the sense of Louder--Wilton (cf. \cite[Section~4]{Wilton}, or Section~\ref{s:examples2} below).  This is a condition for a complex to have no `obvious' simplifications, and plays a key role in Louder--Wilton's proof that that one-relator groups with negative immersions are coherent \cite{LW}. In Theorem~\ref{t:visible_irreducibility}, we show that visible irreduciblity can be described in terms of the local homology sheaf.

\begin{thmA}\label{t:visible_irreducibility}
Let $Y$ be a 2-complex. The following are equivalent:
\begin{itemize}
\item $Y$ is visibly irreducible.
\item $Y$ is a local homology CM complex  and face maps of the local homology sheaf are surjective.
\item $Y$ is a local homology CM complex and face maps of the local cohomology cosheaf are injective.
\end{itemize}
\end{thmA}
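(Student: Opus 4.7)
Our plan is to split the equivalence into two pieces: a clean duality argument for $(2) \iff (3)$, and a direct local computation for $(1) \iff (2)$. The computations needed for the second part simultaneously verify the freeness hypothesis used in the first, so we carry them out together.

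For each simplex $\sigma$ with interior point $x_\sigma$, we have $h_*(\sigma) \cong \tilde{H}_{*-1}(\lk(x_\sigma))$ from the long exact sequence of $(Y, Y \setminus x_\sigma)$. At an interior point of a 2-cell $\tau$, the link is $S^1$ and $h_2(\tau) = \mathbb{Z}$. At an interior point of an edge $e$ with $k_e$ incident 2-cells, the link is a ``theta-graph'' (two vertices joined by $k_e$ arcs), so $h_2(e) = \mathbb{Z}^{k_e - 1}$; local CM additionally forces $k_e \geq 1$ so that $h_1(e) = 0$. At a vertex $v$, $h_2(v) = H_1(\lk(v))$ and local CM forces $\lk(v)$ to be connected. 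All stalks are therefore finitely generated free abelian, so $h^2(\sigma) = \hom(h_2(\sigma), \mathbb{Z})$ and the cosheaf face map is the transpose of the sheaf face map; since a homomorphism of free abelian groups is surjective iff its transpose is injective, this gives $(2) \iff (3)$.

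For $(1) \iff (2)$ we identify the face maps combinatorially. The map $h_2(e) \to h_2(\tau)$ is the projection of $\ker(\mathbb{Z}^{k_e} \to \mathbb{Z})$ onto the coordinate indexed by $\tau$, and is surjective iff $k_e \geq 2$, i.e.\ there are no free faces. The map $h_2(v) \to h_2(e)$ is the residue map $H_1(\lk(v)) \to \mathbb{Z}^{k_e - 1}$ that sends a 1-cycle of $\lk(v)$ to the tuple of its edge-weights at the vertex $p_e \in \lk(v)$ corresponding to $e$ (these weights satisfy one balance relation at $p_e$, whence the $-1$ in the exponent). Surjectivity holds precisely when, for every two of the $k_e$ edges of $\lk(v)$ meeting at $p_e$, the far endpoints are joined by a path in $\lk(v) \setminus \{p_e\}$; equivalently, all $k_e$ far endpoints lie in a single connected component of $\lk(v) \setminus \{p_e\}$. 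The remaining face maps $h_2(v) \to h_2(\tau)$ factor as compositions of the previous two and so are automatic.

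To close the loop we match these conditions with Louder--Wilton's definition of visible irreducibility from \cite{Wilton, LW}, which amounts to the conjunction of (a) no free face, (b) connected vertex links, and (c) the non-separation property above at every link vertex $p_e$. The main obstacle is pinning down this definition precisely and verifying that the combinatorial match is exact --- in particular, handling 2-cells with a repeated vertex, which produce self-loops at $p_e$ in $\lk(v)$ and must be treated correctly both in the residue map and in the Louder--Wilton combinatorics. Once the dictionary is in place, the local computations above give $(1) \iff (2)$ directly, which combined with $(2) \iff (3)$ completes the proof.
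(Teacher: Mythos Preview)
Your proof is correct and follows essentially the same route as the paper: both obtain $(2)\Leftrightarrow(3)$ from duality of the free-abelian stalks, and both match surjectivity of the edge-to-face and vertex-to-edge maps to conditions (iii) and (iv) of visible irreducibility respectively. The paper packages the vertex-to-edge step a bit more cleanly by identifying that map with $H_1(\Gamma)\to H_1(\Gamma,\Gamma-\ostar_\Gamma(p_e))$ and reading off the cokernel $\tilde H_0(\Gamma-\ostar_\Gamma(p_e))$ from the long exact sequence of the pair, and your worry about repeated vertices and self-loops is unnecessary here since the paper works with simplicial complexes.
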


Theorem~\ref{t:visible_irreducibility}, combined with Atanasov's example, shows that even with strong restrictions on the local homology sheaf, fundamental groups of aspherical CM complexes may not be duality groups.

\subsection{Further Questions}

\begin{question}\label{CMclassquestion}
Does every duality group of finite type have a classifying space that is a local homology CM complex?
\end{question}
This fits in a series of questions going back to Wall \cite{Wall}, who asked whether every Poincar\'e duality group admits a closed manifold as its classifying space. That question was answered in the negative by Davis \cite{Davis1998}: there are Poincar\'e duality groups of cohomological dimension $\geq 4$ that are not finitely presented. Davis \cite[Question~3.5]{Davis2000} then asked if a finitely-presented, torsion-free group satisfies Poincar\'e duality over a ring $R$, does that group admit a closed $R$-homology manifold as its classifying space? This question has a negative answer when $R=\mathbb{Q}$ \cite{Fowler}, but is open for $R=\Z$. Note that a negative answer to this question would not imply a negative answer to Question~\ref{CMclassquestion}.

\begin{question}\label{q:duality_when}
Suppose $Y$ is a finite, aspherical, local homology CM simplicial complex with fundamental group $G$. What conditions on the local homology sheaf of $Y$ (or alternatively, its universal cover $X=\tilde{Y}$) ensure that $G$ is a duality group?
\end{question}

Given the example in Figure~\ref{fig:Atanasov}, it is likely that global conditions on the universal cover are required, as one has to relate the local properties of the space to the behaviour of $H^*_c(X;\mathbb{Z})$: CAT(0) geometry was essential in Brady--Meier's classification of when RAAGs are duality groups, and discrete Morse theory was the local-to-global tool at the heart of both proofs that $\out(F_N)$ is a duality group. 

When $G$ is a duality group one can compare Bieri--Eckmann duality and CM duality via the isomorphisms
\begin{equation*}
	\begin{tikzcd}
	H_{n-k}(G; D) &	H^k(G;\Z) \arrow[l,"c\cap -"',"\cong"]\arrow[r,"{[Y]} \cap -","\cong"'] &	H_{n-k}(Y;h^*_Y).
	\end{tikzcd}
\end{equation*}
Indeed, if $Y$ is CM then with a little work one can show that there exists a natural map
	\begin{equation*}\label{e:compareduality}
	 \pi\colon H_{k}(G; H^*_c(X;\Z))\rar 	H_{k}(Y;h^*_Y),
	\end{equation*}	
that is an isomorphism if and only if $G$ satisfies Bieri--Eckmann duality for the trivial module.

In light of Theorem~\ref{t:compactly_sup_sections}, one can as the following:

\begin{question}
Is the dualizing module of $\out(F_N)$ free abelian? More generally, is the local homology sheaf on the spine of reduced Outer space semistable?
\end{question}

We suspect one might be able to obtain a positive answer to both of these questions via careful use of the proofs in \cite{BF} or \cite{BSV}. More generally, we include the following question of Peter Kropholler:

\begin{question}[P. Kropholler]\label{q:Peter}
Is there an example of a Bieri--Eckmann duality group whose dualizing module is not free abelian?
\end{question}

By Exercise~4 of \cite[Chapter VII.10]{Brown}, when $D$ is free abelian we have $H_k(G;M) \cong H^{n-k}(G;\hom(D,M))$ for all $M$ (i.e., one can also twist coefficients on the side of cohomology).

\subsection{The structure of this article}

In Section~\ref{s:sheaf_stuff} we give an introduction to combinatorial sheaves and cosheaves and summarise the key results concerning CM duality. We describe how Corollary~\ref{c:homological_D} and Theorem~\ref{t:compactly_sup_sections} follow from \cite{WW1}. 

In Section~\ref{s:Outer_space} we introduce thickenings $\Lm$ and $\K$ of the spine and reduced spine of Outer space, show that they are homotopy CM, and use this to prove Theorem~\ref{t:out}.

In Section~\ref{s:examples2} we prove Theorem~\ref{t:visible_irreducibility} on the relationship between the local homology sheaf and the visible irreducibility condition of Louder--Wilton.

\subsection{Acknowledgements}
We thank Alexandra Pettet for asking about the possibility of translating Harer's duality proof from the mapping class group to $\out(F_N)$. We thank Rachael Boyd and Karen Vogtmann for sharing very helpful notes and references, and Peter Kropholler and Peter Patzt for helpful discussions. Both authors are supported by the Royal Society of Great Britain.

\section{Sheaf cohomology and cosheaf homology on simplicial complexes}\label{s:sheaf_stuff}

This section summarises the combinatorial sheaf theory needed for this paper; this is described in full in \cite{WW1}. The main objects of interest to us in this paper are discrete groups (usually denoted $G$) and locally finite simplicial complexes (usually denoted $X$). We will also assume $X$ is \emph{oriented}, so the vertices of each simplex have a preferred ordering. In particular there is a consistent notion of the \emph{front} and \emph{back} $k$-dimensional faces of simplices.

\subsection{Combinatorial sheaves and cosheaves}

 The simplices of a complex form a poset with respect to inclusion, where $\sigma \leq \tau$ if $\sigma$ is contained in $\tau$. A \emph{combinatorial sheaf}  on $X$ is a functor $\mathcal{F}$ from the poset of simplicies to the category of abelian groups.  In particular there is an abelian group $\mathcal{F}(\sigma)$ assigned to each simplex and a homomorphism \[ \mathcal{F}(\sigma <\tau) \colon \mathcal{F}(\sigma) \to \mathcal{F}(\tau) \] whenever $\sigma < \tau$. Requiring $\mathcal{F}$ to be a functor is equivalent to the condition that \[ \mathcal{F}(\rho < \tau) = \mathcal{F}(\sigma <\tau) \circ \mathcal{F}(\rho<\sigma)\] whenever $\rho < \sigma < \tau$. A \emph{combinatorial cosheaf} does the same thing except the maps go in the opposite direction; more precisely, a combinatorial cosheaf is a contravariant functor from the poset of simplices of $X$ to the category of abelian groups. 
 
 One can study sheaves and cosheaves on X valued in other categories, for instance $R$--modules. However in this paper, whenever we talk about a (co)sheaf, it will be valued in the category of abelian groups.

\subsection{The local homology sheaf and the local cohomology cosheaf.} \label{s:local_hom}

Let $\ostar(\sigma)$ be the neighbourhood of $\sigma$ spanned by the interiors of the simplices $\tau \geq \sigma$ (including the interior of $\sigma$ itself). Then $X - \ostar(\sigma)$ is a subcomplex of $X$. Working over $\mathbb{Z}$, let $C^{\sigma}_\bullet(X)=C_\bullet(X,X-\ostar(\sigma))=C_\bullet(X)/C_\bullet(X-\ostar(\sigma))$ be the associated relative chain complex. The \emph{$k$th local homology of $X$ at $\sigma$} is \[ h_k(\sigma):=H_k(C_\bullet^{\sigma}(X)).\] As $C^{\sigma}_\bullet(X)$ is spanned by the simplices $\tau \geq\sigma$, this gives a simple combinatorial description of $h_k(\sigma)$.

If $\sigma \leq \tau$ then we can view $C^{\tau}_\bullet(X)$ as a quotient complex of $C^{\sigma}_\bullet(X)$, which induces a map \[ h_k(\sigma <\tau) :h_k(\sigma) \to h_k(\tau), \]  induced by the associated map of pairs $(X,X-\ostar(\sigma)) \to (X,X-\ostar(\tau))$. Note that if $\sigma$ is maximal then $$C^{\sigma}_\bullet(X)= \langle \sigma \rangle$$ and $h_k(\sigma)=\mathbb{Z}$ in the dimension of $\sigma$ and is trivial in every other degree.

We say that $X$ is a \emph{local homology CM complex of dimension $n$} if $h_k(\sigma)=0$ unless $k=n$ for all $\sigma \in X$. This implies that every maximal simplex is $n$-dimensional, and as there are no $n+1$ simplices we have a simpler description \[ h_n(\sigma)= \ker (C^{\sigma}_n(X) \to C_{n-1}^{\sigma}(X))\] of the $n$th local homology. In particular, when $X$ is locally CM all the local homology groups are free abelian.

Taking $C^\bullet_\sigma(X)=\hom(C_\bullet^\sigma(X),\mathbb{Z})$ gives dual cochain complexes which compute local cohomology. This gives a (graded) cosheaf $h^*$ on $X$, with corestriction maps $h^\ast(\tau >\sigma) :h^\ast(\tau) \to h^\ast(\sigma)$. Typically for us $X$ will be a local homology CM complex and we will only be interested in the degree $n$ parts $h_n$ and $h^n$.

\subsection{Sheaf cohomology and cosheaf homology}

Let $\mathcal{F}$ be a combinatorial sheaf on $X$ and let $X_k$ be the set of $k$-simplices of $X$. We define \[C^k(X;\mathcal{F}):= \prod_{\sigma \in X_k} \mathcal{F}(\sigma).\] Let $|\sigma:\tau|=(-1)^i$ if  $\sigma$ is the $i$th facet of $\tau$, and set $|\sigma:\tau|=0$ otherwise. If $\alpha\in \mathcal{F}(\sigma)$ and $\sigma \in X_k$, we define a coboundary map termwise on $C^\bullet(X;\mathcal{F})$ by \begin{equation*}\delta\alpha= \sum_{\tau \in X_{k+1}} |\sigma : \tau| \mathcal{F}(\sigma < \tau)(\alpha).\end{equation*} One can verify that this gives a cochain complex whose cohomology we define to be the \emph{cohomology of $X$ with coefficients in $\mathcal{F}$}. When $X$ is locally finite, there is a compactly supported version of this where \begin{equation}\label{eq:cpltlysuppsheafcoh}C_c^k(X;\mathcal{F}):= \bigoplus_{\sigma \in X_k} \mathcal{F}(\sigma),\end{equation} with the coboundary maps defined the same way. 

Similarly, the \emph{cosheaf homology} of a cosheaf $\mathcal{G}$ is the homology of the chain complex given by \[C_k(X;\mathcal{G}):= \bigoplus_{\sigma \in X_k} \mathcal{G}(\sigma)\] and a boundary operator defined by \[\partial \alpha=\sum_{i=0}^k(-1)^i\mathcal{G}(\sigma > \sigma_i)(\alpha)\] for $\alpha \in \mathcal{G}(\sigma)$ and $\sigma \in X_k$, with $\sigma_i$ denoting the $i$th face of $\sigma$. When $X$ is locally finite, we can also define \emph{locally finite homology with coefficients in $\mathcal{G}$} by taking homology of the chain complex with terms \[C_k^\lf(X;\mathcal{G}):= \prod_{\sigma \in X_k} \mathcal{G}(\sigma),\] and the same boundary operator as above.

\subsection{Cap products and the fundamental class}

Suppose that $X$ is $n$-dimensional and locally finite. Then $h^n(\sigma)=\langle \sigma^*\rangle$ for every $n$-simplex $\sigma$, and the formal sum $$[X]:=\sum_{\sigma \in X_n}\sigma^*$$ determines an element of $H^{\lf}_n(X,h^n)$ called the \emph{fundamental class} \cite[Section~4.2.1]{WW1}. If $X$ is connected and locally CM, then $H^{\lf}_n(X,h^n)=\mathbb{Z}$ and is generated by the fundamental class. In \cite{WW1}, we give formulations of the cap product required for all permutations of coefficients given in Theorem~\ref{t:duality}, however in this paper we will only give the cap product
\begin{equation*}
    H_k^{\textnormal{lf}}(X,h^n) \otimes H^l_c(X,\Z) \xrightarrow{\cap} H_{k-l}(X,h^n),
\end{equation*}
induced by the map
\begin{equation}\label{cupversion2}
	\shchain \cap \intcochain= \sum_{\smallsimplex \in X_k}\intcochain(\smallsimplex_{\geq k-l})h^n( \smallsimplex> \smallsimplex_{\leq k-l})(\shchain_\smallsimplex),	
\end{equation}
where $\shchain \in C_k^{\textnormal{lf}}(X,h^n)$, $\intcochain \in C^l_c(X,\Z)$, and $\eta_\sigma\in h^n(\sigma)$ is the value of $\eta$ at $\sigma$. If $\sigma=[v_0,\ldots,v_k]$ then $\sigma_{\geq k-l}=[v_{k-l}, \ldots, v_k]$ is the \emph{back $l$--face} and $\sigma_{\leq k-l}=[v_0, \ldots, v_{k-l}]$ is the \emph{front ($k-l$)--face} of $\sigma$. 

When $\shchain=[X]$ is the fundamental class,  $\shchain_\smallsimplex=\sigma^*$ for every $n$-simplex $\sigma$ and (\ref{cupversion2}) can be unpacked further (see \cite[Section~4.2.2]{WW1} for details).

\subsection{Relative versions of CM duality} \label{s:relative_CM}

Let $X$ be a locally finite simplicial complex and $L$ be a full subcomplex of $X$, and let $L^\textnormal{vc}$ be the full complex spanned by vertices in $X - L$ (we call $L^\textnormal{vc}$ the \emph{vertex complement} of $L$). The cap product defined in the above setting applies for pairs $(X,L)$, as long as in any simplex $\sigma$, the orientation of $X$ orders vertices in $L$ before those in $L^\textnormal{vc}$. This is so that back faces, when it is appropriate, lie in $L^\textnormal{vc}$ (see \cite[Section~2.5.4]{WW1} for details).

\begin{theorem}[Relative CM Duality]\label{t:dualitytheorem}
	Let $X$ be locally finite homology CM complex of dimension $n$ with local homology sheaf $h_*$ and local cohomology sheaf $h^*$. Let $L$ be a full subcomplex of $X$ and orient $L\vc$ before $L$. Then capping with the fundamental induces isomorphisms
	\begin{align}
H^l_c(L; h_n|_L) &\xrightarrow{[\fund{X}]\cap -} H_{n-l}(X,L^\textnormal{vc};\Z), \text{ and} \\
H^l_c(X,L;\Z) &\xrightarrow{[\fund{X}]\cap -} H_{n-l}(L^\textnormal{vc};h^n|_{L^\textnormal{vc}}).
	\end{align}
	
\end{theorem}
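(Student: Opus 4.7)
The proof strategy is to reduce the relative isomorphisms to absolute CM duality (Theorem~\ref{t:duality}) via the Five Lemma applied to the long exact sequences induced by natural short exact sequences of (co)chain complexes for the full-subcomplex pair $(L, L\vc)$.

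For the first isomorphism, I would pair the SES of cochain complexes $0 \to C^*_c(X, L; h_n) \to C^*_c(X; h_n) \to C^*_c(L; h_n|_L) \to 0$ with the SES of chain complexes $0 \to C_*(L\vc; \mathbb{Z}) \to C_*(X; \mathbb{Z}) \to C_*(X, L\vc; \mathbb{Z}) \to 0$; both are well defined because $L$ and $L\vc$ are full subcomplexes. The cap product with $[X]$ in the form from \cite{WW1} realising the first iso of Theorem~\ref{t:duality} is a chain map $C^*_c(X; h_n) \to C_{n-*}(X; \mathbb{Z})$, and the first thing to check is that the prescribed vertex ordering---$L\vc$ before $L$---makes this cap descend to a morphism between the two SESs. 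Using the cap product formula analogous to (\ref{cupversion2}), a cochain supported on $X \setminus L$ contributes only via $n$-simplices whose back face lies outside $L$, and the vertex ordering then forces the corresponding front $(n-l)$-face to lie entirely in $L\vc$; this produces the required left vertical map $C^*_c(X, L; h_n) \to C_{n-*}(L\vc; \mathbb{Z})$.

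With the cap exhibited as a map of SESs, the Five Lemma on the induced long exact sequences in (co)homology yields the first iso, provided the middle and one outer vertical map are already isomorphisms. The middle is an iso by Theorem~\ref{t:duality}; for the outer one, I would induct on the number of vertices in $L$, with base case $L = \emptyset$ (where the outer map reduces to Theorem~\ref{t:duality}) and inductive step adding a single vertex at a time: a pair of SESs comparing the old and new pair lets the Five Lemma propagate the isomorphism. The second isomorphism follows in parallel, using the SESs $0 \to C^*_c(X, L; \mathbb{Z}) \to C^*_c(X; \mathbb{Z}) \to C^*_c(L; \mathbb{Z}) \to 0$ and $0 \to C_*(L\vc; h^n|_{L\vc}) \to C_*(X; h^n) \to C_*(X, L\vc; h^n) \to 0$, together with the cap product realising the third iso of Theorem~\ref{t:duality}.

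The principal obstacle is the chain-level verification that the cap product respects these SES structures under the prescribed vertex ordering. This is ultimately a direct---but delicate---combinatorial unpacking of the cap product formulas from \cite{WW1}, requiring careful tracking of signs, face maps, and the interaction of the front/back face decomposition with the two-stratum vertex ordering; once this is in hand the remainder of the argument is formal homological algebra.
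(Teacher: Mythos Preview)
The paper does not prove this theorem here; it is imported from the companion paper \cite{WW1} (the sentence immediately after the statement reads ``There are other versions of these isomorphisms given in Theorem~4.26 of \cite{WW1}; we only give two relevant ones here''). So there is no in-paper argument to compare your proposal against---you are sketching a proof of a result the authors establish elsewhere.

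That said, your Five-Lemma route is the standard passage from absolute to relative duality, and the chain-level verification you isolate (that capping with $[X]$ carries the cochain SES for $(X,L)$ to the chain SES for $(X,L\vc)$ under the ordering ``$L\vc$ before $L$'') is indeed the crux. One genuine gap: your induction ``on the number of vertices in $L$'' presumes $L$ is finite, but in the paper's principal application (the proof of Theorem~\ref{t:out}) one takes $X=\Lm$ and $L=\X'$, which is infinite. You would need an additional direct-limit step---exhausting $L$ by finite full subcomplexes and checking that both sides commute with the relevant colimits---to finish. Note also that the outer vertical map $H^l_c(X,L;h_n)\to H_{n-l}(L\vc;\Z)$ in your ladder is \emph{not} an instance of absolute CM duality on $L\vc$: the sheaf is $h_n$ of $X$ (not of $L\vc$), and relative cochains on $(X,L)$ are supported on simplices merely meeting $L\vc$ rather than lying in it. So the induction is doing real work and cannot be replaced by a second appeal to Theorem~\ref{t:duality}.
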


There are other versions of these isomorphisms given in Theorem~4.26 of \cite{WW1}; we only give two relevant ones here.

\subsection{Semistability and the global section group} \label{s:semistable}

The subcomplexes of $X$ form a direct system with respect to inclusion. If $K$ is a subcomplex, we use \[ \Gamma(K)=H^0(K;h_*|_K)\] to denote the group of sections of the local homology sheaf of $X$ over $K$. If $K \subset L$, then restriction of sections gives a restriction homomorphism $r_{LK} :\Gamma(L) \to \Gamma(K)$. Local finiteness of $X$ implies that we can restrict our attention to sections over finite subcomplexes: these form an inverse system whose limit is still the global section group $\Gamma(X)$.

An inverse system $\{A_i\}$ with maps $\{f_{ij}: A_i \to A_j\}$ indexed by a filtered poset $I$ is \emph{semistable}  if for all $i \in I$ there exists $j \geq i$ such that for all $k \geq j \geq i$  the map $f_{ki}$ has the same image in $A_i$ independently of $k$. This is also known as the \emph{Mittag-Leffler} condition. 

\begin{definition}[Semistability of the local homology sheaf]We say that the local homology sheaf on $X$ is \emph{semistable} if the inverse system of sections of $h_*$ over finite subcomplexes is semistable. \end{definition}

If $A$ is an abelian group, we say a homomorphism $f\co \Gamma(X) \to A$ is \emph{compactly determined} if there exists a finite subcomplex $K \subset X$ such that $f(s)=0$ whenever a section $s \in \Gamma(X)$ vanishes on $K$ (this implies the image of a section under $f$ is determined by its restriction to $K$). We use $\hom_c(\Gamma(X),A)$ to denote the group of compactly determined homomorphisms from $\Gamma(X)$ to $A$. Using work of Geoghegan \cite{Geoghegan}, in \cite{WW1} we prove the following:

\begin{theorem}[\cite{WW1}, Corollary~3.19]
Suppose that $X$ is locally CM. Then the local homology sheaf on $X$ is semistable if and only if $H_0(X;h^*)$ is a free abelian group. If this is the case, then \[H_0(X;h^*) \cong \hom_c(\Gamma(X),\mathbb{Z}).\] 
\end{theorem}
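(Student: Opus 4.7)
The plan is to set up, for every finite subcomplex $K \subset X$, a chain-level identification of the cosheaf chain complex computing $H_0(K;h^n|_K)$ with the $\mathbb{Z}$-dual of the sheaf cochain complex computing $\Gamma(K)$, then to pass to the colimit over finite $K$ and invoke Geoghegan's classical analysis of semistable inverse systems.

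Because $X$ is locally CM and locally finite, each $h_n(\sigma)$ is finitely generated and free abelian, and the universal coefficient theorem gives natural isomorphisms $h^n(\sigma) \cong \hom(h_n(\sigma),\mathbb{Z})$, with corestrictions dual to restrictions. For a finite subcomplex $K$, taking direct sums over the finitely many simplices of $K$ promotes this pointwise duality into a chain-level isomorphism identifying the cosheaf complex $C_1(K;h^n|_K) \xrightarrow{\partial} C_0(K;h^n|_K)$ with the $\mathbb{Z}$-dual of the sheaf complex $C^0(K;h_n|_K) \xrightarrow{d} C^1(K;h_n|_K)$. Dualising the short exact sequence $0 \to \Gamma(K) \to C^0(K;h_n|_K) \to \im(d) \to 0$, and noting that $\im(d)$ is free abelian as a subgroup of the free abelian group $C^1(K;h_n|_K)$ so that $\operatorname{Ext}(\im(d),\mathbb{Z})=0$, then yields a natural identification $H_0(K;h^n|_K) \cong \hom(\Gamma(K),\mathbb{Z})$.

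Since cosheaf chains are defined as direct sums of local values, $H_0(X;h^n) \cong \colim_K H_0(K;h^n|_K) \cong \colim_K \hom(\Gamma(K),\mathbb{Z})$, where $K$ ranges over finite subcomplexes. Meanwhile $\Gamma(X) = \lim_K \Gamma(K)$ by local finiteness, and unpacking the definition of compact determination gives $\hom_c(\Gamma(X),\mathbb{Z}) \cong \colim_K \hom(\im(r_K),\mathbb{Z})$, with $r_K : \Gamma(X) \to \Gamma(K)$ the restriction. The Mittag--Leffler condition is precisely what is needed to pass from the system $\{\Gamma(K)\}$ to the cofinal subsystem of eventual images $\{\im(r_K)\}$, whose transition maps are surjective. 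Applying the inverse-system analysis of \cite[Chapters 12--13]{Geoghegan} to this system of finitely generated free abelian groups then shows that $\colim_K \hom(\Gamma(K),\mathbb{Z})$ is free abelian iff the system is semistable, and in that case coincides with $\hom_c(\Gamma(X),\mathbb{Z})$.

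The main obstacle is the sign bookkeeping in the chain-level duality of the second step: one must check carefully that the cosheaf boundary $\partial$ matches the transpose $d^*$ of the sheaf coboundary under the pointwise identifications, with the signs from face orderings working out consistently. The more delicate global subtlety---that the natural map $\colim_K \hom(\Gamma(K),\mathbb{Z}) \to \hom_c(\Gamma(X),\mathbb{Z})$ is an isomorphism precisely when $\{\Gamma(K)\}$ is semistable---is then isolated in the final step and handled by Geoghegan's criterion.
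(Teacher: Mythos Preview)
The paper does not give a proof of this statement; it is quoted from the companion paper \cite{WW1}, so there is no in-paper argument to compare against. That said, your second step contains a genuine gap.

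You claim that dualising $0 \to \Gamma(K) \to C^0(K;h_n|_K) \to \im(d) \to 0$ yields $H_0(K;h^n|_K) \cong \hom(\Gamma(K),\mathbb{Z})$. But dualising that sequence only gives $\hom(\Gamma(K),\mathbb{Z}) \cong \hom(C^0,\mathbb{Z})/p^*\!\left(\hom(\im d,\mathbb{Z})\right)$, where $p\colon C^0 \twoheadrightarrow \im d$ is the surjection. What you actually need is $\hom(C^0,\mathbb{Z})/\im(d^*)$, and since $d = j \circ p$ with $j\colon \im d \hookrightarrow C^1$, you are tacitly assuming that $j^*\colon \hom(C^1,\mathbb{Z}) \to \hom(\im d,\mathbb{Z})$ is onto. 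This fails whenever $\im d$ is not a direct summand of $C^1$. The correct statement is the UCT-type short exact sequence
\[
0 \longrightarrow \operatorname{Ext}\!\big(H^1(K;h_n|_K),\mathbb{Z}\big) \longrightarrow H_0(K;h^n|_K) \longrightarrow \hom(\Gamma(K),\mathbb{Z}) \longrightarrow 0,
\]
and the Ext term can be nonzero: for $K = X = \mathbb{RP}^2$ one has $\Gamma(K)=0$ (no global section of the orientation sheaf) while $H_0(K;h^2)\cong H^2_c(\mathbb{RP}^2;\mathbb{Z})\cong\mathbb{Z}/2$ by CM duality. These torsion contributions persist in the directed colimit, so your identification $H_0(X;h^n)\cong \colim_K\hom(\Gamma(K),\mathbb{Z})$ is not valid without further input, and the clean reduction to Geoghegan's criterion in your final paragraph does not go through as written. (Indeed, the $\mathbb{RP}^2$ example suggests the result as literally stated here may be relying on a hypothesis made explicit only in \cite{WW1}; in any case your argument must account for, or rule out, this Ext contribution.)
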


This allows us to prove Theorem~\ref{t:compactly_sup_sections} from the introduction.

\begin{theorem}[Theorem~\ref{t:compactly_sup_sections}]
Let $G$ be a duality group that admits a free, cocompact action on a contractible, locally finite, homology CM complex $X$ of dimension equal to $\cd(G)$. Then the local homology sheaf on $X$ is semistable if and only if the dualizing module $D$ of $G$ is free abelian. If this is the case then \[ D \cong \hom_c(\Gamma(X),\mathbb{Z}),\] where $\Gamma(X)$ is the group of global sections of the local homology sheaf of $X$.
\end{theorem}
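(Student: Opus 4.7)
The plan is to derive Theorem~\ref{t:compactly_sup_sections} by combining Corollary~\ref{c:homological_D} with the result cited just above the statement (Corollary~3.19 of \cite{WW1}). The dimension hypothesis $\dim(X)=\cd(G)$ is exactly what is needed to bridge the two.

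First I would apply Corollary~\ref{c:homological_D} to the setup. Setting $d=\cd(G)$ and $n=\dim(X)$, the hypothesis that $G$ is a duality group acting freely and cocompactly on the contractible, locally finite, homology CM complex $X$ yields a $G$-equivariant isomorphism
\begin{equation*}
D \;\cong\; H_{n-d}(X;h^n).
\end{equation*}
Under the additional hypothesis $n=d$, this collapses to $D \cong H_0(X;h^n)$. Since $X$ is an $n$-dimensional homology CM complex, its local cohomology cosheaf is concentrated in degree $n$, so $H_0(X;h^n) = H_0(X;h^*)$ in the notation of the cited theorem.

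Next I would apply the theorem from \cite{WW1} recalled just above the statement: for $X$ locally CM, the local homology sheaf on $X$ is semistable if and only if $H_0(X;h^*)$ is free abelian, and in that case
\begin{equation*}
H_0(X;h^*) \;\cong\; \hom_c(\Gamma(X),\mathbb{Z}).
\end{equation*}
Chaining these two isomorphisms together gives both conclusions of the theorem: $D$ is free abelian iff the local homology sheaf is semistable, with $D\cong \hom_c(\Gamma(X),\mathbb{Z})$ in that case.

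There is not really a hard step here; the work was done in establishing Corollary~\ref{c:homological_D} and the Geoghegan-style semistability result in \cite{WW1}. The only thing to be careful about is bookkeeping between the graded local (co)homology objects $h_*,h^*$ and their concentration in degree $n$, and the observation that $n-d=0$ under the equal-dimension hypothesis, which is what makes the dualizing module appear as a degree-zero object and therefore amenable to description via global sections.
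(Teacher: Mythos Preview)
Your proposal is correct and matches the paper's proof essentially line for line: the paper's argument is a one-sentence invocation of CM duality (Theorem~\ref{t:duality}) to get $D\cong H_0(X;h^*)$ when $\dim(X)=\cd(G)$, with the rest following from the cited \cite[Corollary~3.19]{WW1}. Your only cosmetic difference is citing Corollary~\ref{c:homological_D} rather than Theorem~\ref{t:duality} directly, which amounts to the same thing.
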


\begin{proof}
When the dimension of $X$ is the same as the cohomological dimension of $G$, under the above hypothesis we have $D \cong H_0(X;h^*)$ by CM duality (Theorem~\ref{t:duality}). 
\end{proof}

\section{The dualizing module of $\out(F_N)$} \label{s:Outer_space}

In this section we discuss the dualizing module of $\out(F_N)$ and its relationship with the spine of Outer space, and a combinatorial version of Bux--Smillie--Vogtmann's \emph{Jewel space}.

\subsection{Poset preliminaries} \label{s:poset_preliminaries}

We have already defined combinatorial sheaves and cosheaves using the \emph{simplex poset} of a simplicial complex, which is the partially ordered set of simplices with respect to inclusion. Conversely, every poset $P$ has an associated simplicial complex $|P|$ called the \emph{geometric realization}. The  complex $|P|$ has vertex set $P$ and a $k$-simplex $\sigma$ for every chain $p_0 < p_1 < \cdots < p_k$ of $k+1$ elements in $P$. As a result, it is convenient to sometimes blur the distinction between a poset and its associated simplicial complex (for example, we will talk about the homotopy type of a poset, meaning the homotopy type of its geometric realization). A map $f:P \to Q$ between posets is a \emph{poset map} if it is order-preserving.

We use $P'$ to denote the \emph{poset of finite chains} in $P$, where each element of $P'$ is a finite length chain in $P$ and $c_1 < c_2$ if $c_2$ is a refinement of $c_1$. The geometric realization of $P'$ is the barycentric subdivision of the geometric realization of $P$. This gives a convenient way to work with barycentric subdivisions. 

For $p,p' \in P$ we define the subposets \begin{align*}p_< &:= \{ q \in P : p < q \} \\
\less{p} &:= \{ q \in P : q < p \} \\ (p,p') &:= \{ q \in P : p < q <p'\}, \end{align*}

and $p_\leq$, $\lesseq{p}$ are defined similarly. The \emph{join} $P \ast Q$ of two posets has underlying set $P \cup Q$ with partial orders on $P$ and $Q$ given by restriction, with the additional relation $p <q$ for all $p \in P$, $q \in Q$. If $\sigma= p_0 <p_1 < \cdots < p_k$ is a $k$-simplex in $|P|$, then its link is given by the join \begin{equation} \less{(p_0)} \ast (p_0,p_1) \ast \cdots \ast (p_{k-1},p_k) \ast (p_k)_< \label{e:join}\end{equation}

We use $\hei_P(p)$ to denote the \emph{height} of an element $p \in P$, dropping the subscript where the overlying poset is understood. This is defined to be the dimension of $\lesseq{p}$, or equivalently the length of the longest chain in $\less{p}$. 

\begin{definition}[Homotopy CM complexes]
A simplicial complex $X$ is an \emph{$n$-dimensional homotopy CM complex} if it is contractible or homotopy equivalent to a wedge of $n$-spheres, and the link $\lk_X(\sigma)$ of every $k$-simplex $\sigma \in X$ is contractible or homotopy equivalent to a wedge product of a set of $(n-k-1)$--spheres. We say a poset is homotopy CM if its geometric realization is.
\end{definition}

A homotopy CM complex is also homology CM. Using \eqref{e:join}, one can obtain the following characterisation of the homotopy CM property (we learned this from Section~2 of \cite{Vogtmann}, where it appears with slightly different notation).

\begin{proposition}[\cite{Quillen}] \label{p:homotopy_CM_conditions}
A poset $P$ is $n$-homotopy CM if and only if:
\begin{itemize}
    \item $P$ is $n$-spherical,
    \item $\less{p}$ is $(\hei(p)-1)$-spherical for all $p \in P$,
    \item $(p,p')$ is $(\hei(p)-\hei(p')-2)$-spherical whenever $p<p'$ in $P$, and
    \item $p_<$ is $(n-\hei(p)-1)$-spherical for all $p \in P$.
\end{itemize}
\end{proposition}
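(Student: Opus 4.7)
The plan is to leverage the link decomposition already highlighted in \eqref{e:join}: for a simplex $\sigma$ of $|P|$ corresponding to a chain $p_0 < \cdots < p_k$, its link is the join
\[
\less{p_0} \ast (p_0, p_1) \ast \cdots \ast (p_{k-1}, p_k) \ast (p_k)_<.
\]
Combined with the classical fact that the join of $m$ complexes which are $n_i$-spherical is $(n_1 + \cdots + n_m + m - 1)$-spherical (with the convention that a $(-1)$-spherical complex is empty), this converts the homotopy CM conditions on $|P|$ into the four bulleted sphericity statements about subposets.

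For the ``if'' direction I would assume the four bulleted conditions and verify the dimension count. The join decomposition of $\lk(\sigma)$ has $k+2$ factors of dimensions $\hei(p_0)-1$, $\hei(p_i) - \hei(p_{i-1}) - 2$ for $i = 1, \ldots, k$, and $n - \hei(p_k) - 1$. Summing these and adding the $k+1$ coming from the join formula, the $\hei$ contributions telescope and the constants collapse to $n-k-1$, which is exactly the dimension required for $\lk(\sigma)$ when $\sigma$ is a $k$-simplex. Together with the standing assumption that $|P|$ itself is $n$-spherical, this yields the homotopy CM property.

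For the ``only if'' direction I would extract each bulleted condition from the homotopy CM hypothesis by choosing $\sigma$ so that all but one factor in \eqref{e:join} degenerates to the empty set. The empty chain recovers $n$-sphericity of $|P|$. To isolate $\less{p}$, take $\sigma$ to be a \emph{saturated} chain $p < p_1 < \cdots < p_k$ in which each open interval $(p_{i-1}, p_i)$ is empty and $p_k$ is maximal in $P$; the link then reduces to $\less{p}$, and the dimension count above forces it to be $(\hei(p)-1)$-spherical. Symmetric constructions with saturated chains below $p$, above $p'$, and through a refinement of $p < p'$ with all other open intervals empty extract the conditions on $p_<$ and $(p, p')$.

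The main obstacle is confirming that the required saturated chains actually exist: one needs every $p \in P$ to sit inside an unextendable chain on either side, and similarly for every pair $p < p'$. Since $|P|$ is finite-dimensional and the very conditions we are extracting force $\less{p}$ and $p_<$ to be nonempty of the correct dimension in the relevant cases, any chain can be refined to a saturated one by a straightforward inductive argument on height. This is the only piece of the proof requiring care; once it is in place, the equivalence reduces entirely to the join-of-spheres dimension bookkeeping carried out above.
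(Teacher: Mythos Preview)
The paper does not supply a proof of this proposition: it is stated with a citation to Quillen, preceded only by the remark that the characterisation follows from the join decomposition \eqref{e:join} of links. Your sketch carries out precisely that standard argument, and your ``if'' direction with the telescoping dimension count is clean and correct.

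One point in the ``only if'' direction deserves tightening. To isolate (say) $p_<$, you take a saturated chain $q_0 \lessdot \cdots \lessdot q_j = p$ from a minimal element to $p$; the CM hypothesis then tells you the link $p_<$ is $(n-j-1)$-spherical, and you need $j = \hei(p)$. Your justification (``the very conditions we are extracting force \ldots\ the correct dimension'') reads as circular. The non-circular fix is to use the CM hypothesis itself: any two saturated chains from minimal elements to $p$ have the same link $p_<$, so the CM condition forces their lengths to agree (the sphericity dimension of a nonempty complex is intrinsic); comparing with a saturated refinement of a longest chain below $p$ then gives $j=\hei(p)$. Purity of $|P|$ (maximal simplices have empty, hence $(-1)$-spherical, links, forcing dimension $n$) handles the concatenation needed for the upward and interval cases. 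Once this is in place your argument goes through.
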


We will also need \emph{Quillen's Fiber Lemma} (\cite{Walker} has a concise proof).

\begin{theorem}[Quillen's Fiber Lemma \cite{Quillen}]\label{t:quillen}Let $f \colon P \to Q$ be a poset map. If $f^{-1}(\lesseq{q})$  is contractible for all $q \in Q$, then $f$ is a homotopy equivalence. If $f^{-1}(q_\leq)$ is contractible for all $q \in Q$, then $f$ is a homotopy equivalence.
\end{theorem}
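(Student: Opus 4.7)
The plan is to prove Quillen's Fiber Lemma via the \emph{non-Hausdorff mapping cylinder} construction. The two statements are equivalent by passing to opposite posets, so I focus on the first: assuming $f^{-1}(\lesseq{q})$ is contractible for every $q\in Q$. The key device is the poset $M=M(f)$ with underlying set $P\sqcup Q$, retaining the internal orders on $P$ and $Q$, and declaring $p\leq q$ (for $p\in P$, $q\in Q$) exactly when $f(p)\leq q$. There are natural inclusions $\iota_P\co P\hookrightarrow M$ and $\iota_Q\co Q\hookrightarrow M$, together with a retraction $\pi\co M\to Q$ given by $\pi(p)=f(p)$ and $\pi|_Q=\id_Q$. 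Since $\pi\circ\iota_P=f$, it suffices to show that both inclusions induce homotopy equivalences on geometric realizations.

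For $\iota_Q$ I would invoke the standard \emph{poset homotopy lemma}: two comparable poset self-maps $\phi\geq\psi$ induce homotopic realizations via the simplicial prism homotopy that on a chain $[x_0<\cdots<x_k]$ runs through the chains $[\psi(x_0)<\cdots<\psi(x_i)\leq\phi(x_i)\leq\cdots\leq\phi(x_k)]$. Applied with $\phi=\iota_Q\circ\pi$ and $\psi=\id_M$---the inequality holds pointwise because $p\leq f(p)=\pi(p)$ in $M$ for every $p\in P$---this yields $\iota_Q\circ\pi\simeq\id_M$. Combined with $\pi\circ\iota_Q=\id_Q$, this shows $|\iota_Q|$ is a homotopy equivalence.

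The substantive step is $\iota_P$, where the contractibility hypothesis enters. For finite $Q$ I would induct on $|Q|$: pick a maximal $q_0\in Q$, set $Q'=Q\smallsetminus\{q_0\}$, $P'=f^{-1}(Q')$, and let $M'\subset M$ be the full subposet on $P'\sqcup Q'$. Since $q_0$ is maximal in $M$ as well, its closed star in $|M|$ is a cone on the realization of the open down-set $M_{<q_0}=f^{-1}(\lesseq{q_0})\sqcup Q'_{<q_0}$. This subposet carries its own mapping cylinder structure, and by a second application of the previous paragraph deformation retracts onto $|f^{-1}(\lesseq{q_0})|$, which is contractible by hypothesis. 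Hence $|M'|\hookrightarrow|M|$ is the attachment of a cone along a contractible subcomplex, a homotopy equivalence. Parallel bookkeeping on the $P$-side---where $P$ is obtained from $P'$ by adjoining $f^{-1}(q_0)$ along its comparison set---lets the induction close. For infinite $Q$ (the case relevant to Outer space), the conclusion follows by writing $M$ as a filtered colimit of its finite down-closed sub-mapping-cylinders and invoking that homotopy equivalences of CW inclusions are preserved under filtered colimits.

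The main obstacle is the bookkeeping in the inductive step: identifying how the cone attachment on the $M$-side corresponds to a compatible attachment on the $P$-side, and checking that both attaching subcomplexes are contractible with matching homotopy equivalences from the outer cylinder, accounts for the bulk of the work. In practice one would simply cite Walker's proof, as the argument is standard and the formulation above contributes nothing new.
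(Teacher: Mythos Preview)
The paper does not supply a proof of this statement: the Fiber Lemma is quoted from Quillen, with a pointer to Walker for a concise argument. Your mapping-cylinder approach is exactly Walker's, and your treatment of $\iota_Q$ via the pointwise inequality $\id_M\leq\iota_Q\circ\pi$ is correct and standard.

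There is, however, a real slip in the inductive step for $\iota_P$. You claim that $M_{<q_0}=f^{-1}(Q_{\leq q_0})\sqcup Q_{<q_0}$ ``by a second application of the previous paragraph deformation retracts onto $|f^{-1}(Q_{\leq q_0})|$''. But that paragraph retracts a cylinder onto its \emph{target}, not its source, and $M_{<q_0}$ is not a mapping cylinder at all: elements of $f^{-1}(q_0)$ lie below no element of $Q_{<q_0}$, so there is no monotone retraction to either side. What actually gives contractibility of $|M_{<q_0}|$ is the \emph{inductive hypothesis}, applied to the downward-closed inclusion $f^{-1}(Q_{\leq q_0})\hookrightarrow M_{<q_0}$: the complement is $Q_{<q_0}$, strictly smaller than $Q$, and for each $q'\in Q_{<q_0}$ the relevant down-set is $f^{-1}(Q_{\leq q'})$, contractible by assumption. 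This means the induction is really proving the auxiliary statement ``a downward-closed subposet whose down-fiber over each complementary element is contractible includes as a homotopy equivalence''; once phrased that way you strip the elements of $Q$ from $M$ one at a time and land directly on $P$. The ``parallel bookkeeping on the $P$-side'' is then unnecessary, and in fact that route does not close as written: $|P'|\hookrightarrow|P|$ need not be a homotopy equivalence even under the fiber hypothesis (take $Q=\{q_0,q_0'\}$ incomparable, $P=\{a,b\}$ incomparable, $f(a)=q_0$, $f(b)=q_0'$). Your closing remark that one would simply cite Walker is precisely what the paper does.
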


\begin{corollary}[Monotone retractions are homotopy equivalences]\label{c:quillen}
Let $r:P \to P$ be a monotone increasing or decreasing (either $r(p) \leq p$ for all $p$ or $r(p) \geq p$ for all $p$) poset map such that $r^2=r$. Then $r$ induces a homotopy equivalence between $P$ and $r(P)$.
\end{corollary}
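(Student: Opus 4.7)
The plan is to apply Quillen's Fiber Lemma (Theorem~\ref{t:quillen}) to the inclusion $i \colon r(P) \hookrightarrow P$, and then use the idempotence of $r$ to extract a homotopy inverse. The monotonicity hypothesis will guarantee that every fiber contains a canonical extreme element, namely $r(p)$ itself.

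I would first treat the monotone decreasing case, in which $r(p) \leq p$ for all $p \in P$. A preliminary observation is that $r^2 = r$ forces $r(P)$ to coincide with the fixed-point set of $r$: if $q = r(q')$ then $r(q) = r^2(q') = r(q') = q$. Fix $p \in P$ and consider the fiber $i^{-1}(\lesseq{p})$, which consists of those $q \in r(P)$ with $q \leq p$. Any such $q$ satisfies $q = r(q) \leq r(p)$ by monotonicity, and conversely $r(p)$ itself lies in the fiber since $r(p) \in r(P)$ and $r(p) \leq p$. Thus $r(p)$ is a maximum of the fiber, making it a cone and hence contractible. Quillen's Fiber Lemma then implies that $i$ is a homotopy equivalence. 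The monotone increasing case is handled symmetrically by considering $i^{-1}(p_\leq)$, observing that $r(p)$ is its minimum, and invoking the second clause of Theorem~\ref{t:quillen}.

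Finally, since $r$ fixes $r(P)$ pointwise, $r \circ i = \id_{r(P)}$, so the corestriction of $r$ to $r(P)$ is a one-sided inverse to the homotopy equivalence $i$ and is therefore itself a homotopy equivalence between $P$ and $r(P)$. The argument is almost formal once one has Quillen's Fiber Lemma in the right form; the only delicate point is the identification of $r(P)$ with the set of fixed points of $r$, without which one cannot conclude that elements of a fiber equal their own $r$-image and so cannot push them past $r(p)$.
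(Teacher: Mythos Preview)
Your proof is correct and uses the same underlying mechanism as the paper's---Quillen's Fiber Lemma together with the observation that idempotence makes $r(P)$ the fixed-point set, and monotonicity then produces a cone point in each fiber. The only difference is which map the Fiber Lemma is applied to: the paper applies it directly to $r\colon P \to r(P)$, noting (in the decreasing case) that for $q \in r(P)$ the fiber $r^{-1}(q_\leq)$ has $q$ itself as a minimum, since $q \leq r(p)$ implies $q \leq r(p) \leq p$. You instead apply it to the inclusion $i\colon r(P)\hookrightarrow P$ and then recover the statement about $r$ via the one-sided inverse argument. The paper's route is marginally more direct (one fewer step), while yours has the small advantage of making explicit that the inclusion of $r(P)$ is already a homotopy equivalence; both are standard and equally valid.
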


\begin{proof}
Let us only consider the case that $r(p) \leq p$ for all $p$. If $q \in r(P)$ then $r(q)=q$. If $q\leq r(p)$ then $q \leq p$. Hence $r^{-1}(q_\leq)$ has $q$ as a minimal element, so is contractible. Hence $r$ is a homotopy equivalence by the Fiber Lemma.
\end{proof}

We call a poset map $r$ as in Corollary~\ref{c:quillen} a \emph{retraction}, for obvious reasons. The above corollary holds under weaker conditions: the map $r$ can be increasing at some points and decreasing at others and $r^2=r$ is not needed (see \cite{Quillen, Bjo2}).

\subsection{Thickening the spine of Outer space}

A \emph{free splitting} of $F_N$ is a minimal action of $F_N$ on a simplicial tree with trivial edge stabilizers. Two splittings are equivalent if they are $F_N$-equivariantly homeomorphic. One can also think of free splittings as marked graphs-of-groups with trivial edge groups. If $T,T'$ are free splittings we say that $T < T'$ if $T$ can be obtained from $T'$ by collapsing a forest. With this partial order, the set of free splittings forms a poset $\FS$ whose geometric realization is called the \emph{free splitting complex}.  The \emph{reduced free splitting complex} $\FSr$ is the full subcomplex of $\FS$ spanned by trees $T$ such that $T/F_N$ contains no separating edges.

We use $\X \subset \FS$ to denote the \emph{spine of Outer space}: this is the full subcomplex of $\FS$ spanned by splittings where the action of $F_N$ is free (equivalently, $T/F_N$ has fundamental group $F_N$). The \emph{reduced spine} $\Xr$ is defined similarly. We define $\dF$ and $\dFr$ to be the full subcomplexes of $\FS$ and $\FSr$ that are spanned by trees $T$ such that the action of $F_N$ on $T$ is \emph{not} free. Equivalently, these are the full subcomplexes of $\FS$ and $\FSr$ spanned by vertices that are not in the spine.

Let $\FS'$ be the poset of chains (i.e. barycentric subdivision) of $\FS$. Vertices in $\FS'$ are given by chains $c=T_1<T_2< \cdots < T_k$ of elements of $\FS$, and $k$-simplices correspond to chains of chains $c_0 < c_1 < \cdots < c_k$, where the partial order on chains is given by inclusion. For a chain $c \in \mathcal{FS}'$ let $\min(c)$ and $\max(c)$ be the first and last elements of the chain, respectively.

\begin{figure}
    \centering
    \begin{minipage}{0.45\textwidth}
        \centering
        \includegraphics[width=0.9\textwidth]{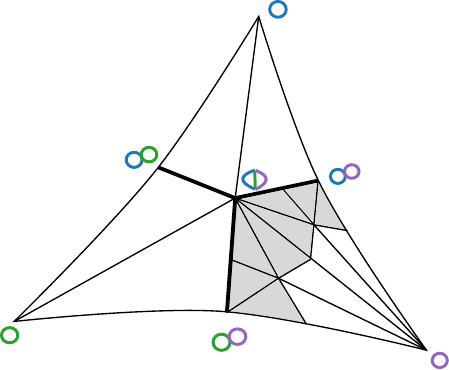} 
        
    \end{minipage}\hfill
    \begin{minipage}{0.45\textwidth}
        \centering
        \includegraphics[width=0.9\textwidth]{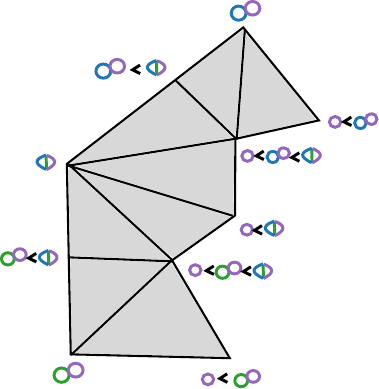} 
        
    \end{minipage}

    \caption{Left: a portion of the free splitting complex $\mathcal{FS}_2$ in rank two. The intersection with the spine $\mathcal{X}_2$ is the (thickened) central tripod. Part of the barycentric subdivision $\mathcal{FS}_2'$ is depicted in the bottom corner, where the thickened spine  $\K$ is shaded.
    Right: an enlarged version of the featured part of $\K$. Vertices are given by chains of vertices in the free splitting complex, and simplices correspond to increasing chains of chains. The boundary $\dK$ can be seen in the picture in the chains with single circle as a minimal element.}
    \label{fig:enter-label}

\end{figure}

\begin{definition}[Thickened spines]Let $\Lm$ be the subcomplex of $\FS'$ spanned by chains $c$ such that $\max(c) \in \X$. Let $\K$ be the subcomplex of $\FSr'$ spanned by chains $c$ such that $\max(c) \in \Xr$. We call $\Lm$ and $\K$ \emph{thickened spines}.
\end{definition}

The spaces $\Lm$ and $\K$ are combinatorial thickenings of the spine of Outer space and reduced Outer space respectively, which fulfill the same role as Jewel space (see Remark~\ref{r:jewel} below). Such an approach also appears in work of Br\"uck--Gupta \cite{BG}, who studied the homotopy type of the boundaries of these complexes.

\begin{proposition} \label{p:retract}
There is a monotone map $\chi \colon \Lm \to \Lm$ with image $\X'$ given by taking $c \in \Lm$ to the subchain $\chi(c)$ of $c$ consisting of trees in $\X$. The spine $\X$ is contractible by the main theorem of \cite{CV}, therefore so is $\Lm$. The map $\chi$ restricts to a monotone map of $\K$ with image $\Xr'$, so $\K$ is also contractible by \cite{CV}.
\end{proposition}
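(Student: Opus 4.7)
The plan is to verify that $\chi$ satisfies the hypotheses of Corollary~\ref{c:quillen} (monotone retractions are homotopy equivalences) and then invoke the Culler--Vogtmann contractibility theorem \cite{CV} for the spine. First I would check that $\chi$ is well-defined as a self-map of $\Lm$. Given a chain $c \in \Lm$, by definition $\max(c) \in \X$, so $\max(c)$ is among the trees of $c$ lying in $\X$; this means $\chi(c)$ is nonempty and $\max(\chi(c)) = \max(c) \in \X$, hence $\chi(c) \in \Lm$.

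Next I would verify the three properties required by Corollary~\ref{c:quillen}. The map $\chi$ is a poset map: if $c_1 \subseteq c_2$, then the sub-chains of trees in $\X$ satisfy $\chi(c_1) \subseteq \chi(c_2)$. It is decreasing because $\chi(c) \subseteq c$ as subsets of $\FS$, which is exactly $\chi(c) \le c$ in the chain poset. And $\chi^2 = \chi$, because every tree in $\chi(c)$ already lies in $\X$, so applying $\chi$ again does nothing. Corollary~\ref{c:quillen} then gives a homotopy equivalence $\Lm \simeq \chi(\Lm)$; the image consists exactly of chains in $\FS'$ with every entry in $\X$, which is the barycentric subdivision $\X'$. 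Since $\X$ is contractible by \cite{CV} (and barycentric subdivision preserves homotopy type), we conclude $\Lm$ is contractible.

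For $\K$ the argument runs in parallel, but there is one small subtlety: I must check that $\chi$ restricts to a self-map of $\K$ with image exactly $\Xr'$, i.e.\ that every tree $T_i$ in $\chi(c)$ lies in $\Xr$ (not merely in $\X$) whenever $c \in \K$. Since $T_i \le \max(c)$ and $\max(c) \in \Xr$, the quotient $T_i/F_N$ is obtained from $\max(c)/F_N$ by collapsing a subgraph, and such a collapse cannot create a separating edge (a separating edge in the collapsed graph would lift to one before collapsing). Combined with the hypothesis $T_i \in \X$, this forces $T_i \in \Xr$. Hence $\chi(c) \in \K$ and $\chi(\K) = \Xr'$. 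Applying Corollary~\ref{c:quillen} and \cite{CV} once more yields contractibility of $\K$.

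The main delicate point is the reduced case: justifying that collapsing a forest preserves the no-separating-edges condition on the quotient. This is a standard fact about free splittings, but it deserves an explicit sentence, since it is exactly what guarantees that the restriction of $\chi$ to $\K$ lands in $\K$ rather than only in $\Lm$. The remaining verifications are purely formal manipulations of chains and subsets.
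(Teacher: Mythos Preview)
Your proof is correct and follows the same route as the paper's (invoke Corollary~\ref{c:quillen} after checking $\chi$ is a decreasing poset retraction, then cite \cite{CV}). One minor simplification: since $\K$ is by definition a subcomplex of $\FSr'$, every tree appearing in a chain $c \in \K$ already lies in $\FSr$, so the image $\chi(c)$ lands in $\Xr'$ automatically and your separating-edge argument is unnecessary (though correct).
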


\begin{proof}
If $c' \in \Lm$ is a subchain of $c \in \Lm$ then $\chi(c')$ is a subchain of $\chi(c)$ so $\chi$ is a poset map that restricts to the identity on $\X'$. Hence $\chi$ is monotone, and in particular is a homotopy equivalence. The same argument applies to $\K$, and both $\X$ and $\Xr$ are shown to be contractible in \cite{CV}.
\end{proof}

\begin{definition}[Boundaries of the thickened complexes]
Let $\dL$ be the full subcomplex of $\Lm$ spanned by chains $c$ such that $\min(c)\in \dF$. Let $\dK$ be the full subcomplex of $\K$ spanned by chains $c$ such that $\min(c) \in \dFr$.
\end{definition}

See Figure~\ref{fig:enter-label} for visual aid. The following result appears in \cite{BG} in a slightly different guise -- we sketch how their results apply in this setting below.

\begin{theorem}\label{t:bg}
The geometric realization of $\dL$ is homotopy equivalent to $\dF$. The geometric realization of $\dK$ is homotopy equivalent to $\dFr$.
\end{theorem}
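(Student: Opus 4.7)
The plan is to prove $|\dL| \simeq |\dF|$ by constructing a natural poset map $\phi \colon \dL \to \dF'$ and applying Quillen's Fiber Lemma (Theorem~\ref{t:quillen}), where $\dF'$ denotes the poset of nonempty chains in $\dF$ (whose geometric realization is the barycentric subdivision of $|\dF|$, hence homotopy equivalent to $|\dF|$). The argument for $\dK \simeq \dFr$ will be identical with the obvious substitutions.

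The first step is to record two structural properties of $\FS$: refining a free splitting yields a free splitting (so $\X$ is closed upward in $\FS$), while collapsing a non-free splitting cannot trivialize every vertex stabilizer (so $\dF$ is closed downward). Consequently, each chain in $\FS$ decomposes uniquely as an initial $\dF$-segment followed by a final $\X$-segment. For $c \in \dL$ both segments are nonempty, and so $\phi(c) := c \cap \dF$ is a nonempty chain in $\dF$, defining an order-preserving map into $\dF'$.

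For the fiber analysis, fix $d \in \dF'$ and write $S := \max(d)$. The fiber is
\[ \phi^{-1}(d_\leq) = \{\, c \in \dL : d \subseteq c \,\}, \]
and I would define a retraction $\rho(c) := d \cup (c \cap \X)$ on it. A quick check verifies that $\rho(c) \in \dL$ (the $\dF$- and $\X$-parts are strictly separated in $c$), that $\rho(c) \subseteq c$, and that $\rho^2 = \rho$. Corollary~\ref{c:quillen} then shows that $\phi^{-1}(d_\leq)$ is homotopy equivalent to the image of $\rho$, which as a poset is exactly $(\X_{>S})'$—the poset of nonempty chains in the subposet $\X_{>S} \subseteq \X$ of refinements of $S$ lying in $\X$. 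Hence $\phi^{-1}(d_\leq)$ is homotopy equivalent to $|\X_{>S}|$.

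The main obstacle—and the step that genuinely invokes Br\"uck--Gupta \cite{BG}—is proving that $\X_{>S}$ is contractible for every $S \in \dF$. A free refinement of $S$ is built by blowing up each vertex of $S/F_N$ into a subgraph representing a free splitting of the corresponding vertex group with trivial stabilizers, which gives $\X_{>S}$ a product-of-spines decomposition whose factors are contractible by Culler--Vogtmann \cite{CV}. Once this is in hand, Quillen's Fiber Lemma yields $|\dL| \simeq |\dF'| \simeq |\dF|$. The case of $\dK$ and $\dFr$ proceeds identically; the corresponding contractibility of $\Xr_{>S}$ for $S \in \dFr$ requires the reduced version of the Br\"uck--Gupta analysis, which must additionally control the no-separating-edges constraint on blow-ups.
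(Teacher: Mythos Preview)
Your overall architecture matches the paper's proof exactly: the same map $\phi\colon \dL \to \dF'$ sending $c$ to its $\dF$-part, the same monotone retraction $c \mapsto d \cup (c\cap\X)$ showing $\phi^{-1}(d_\leq)\simeq \phi^{-1}(d)\cong (\X_{>S})'$ with $S=\max(d)$, and then the Fiber Lemma. The only substantive step left in both arguments is the contractibility of $\X_{>S}$, and here your justification has a gap.

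You assert that $\X_{>S}$ decomposes as a product of spines whose factors are contractible by \cite{CV}. The product decomposition over the vertices of $S/F_N$ is correct, but the factor at a vertex $v$ with nontrivial stabiliser $A_v$ is \emph{not} the spine of Outer space for $A_v$: one must additionally record where the half-edges of $S/F_N$ incident to $v$ attach in the blown-up graph. The resulting poset is a spine of Outer space for $A_v$ \emph{with marked points} (an Auter-space-type object when $v$ has valence one, more generally with several marked points). Its contractibility is true, but it is not the content of \cite{CV}; one needs either the Hatcher/Hatcher--Vogtmann results on Auter space and its relatives, or a general deformation-space argument. So the idea is sound, but the citation does not close the argument.

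The paper avoids this issue by a different second retraction: it sends $T\in \X_{>S}$ to its \emph{maximal core extension} $T'$ of $S$ (obtained by collapsing the forest components of the subgraph of $T/F_N$ lying over the vertices of $S/F_N$). This is a monotone decreasing retraction of $\X_{>S}$ onto the subposet of core extensions of $S$, and the contractibility of that subposet is exactly \cite[Proposition~7.2]{BG} (and \cite[Proposition~7.3(2)]{BG} in the reduced case). This is the precise place where \cite{BG} enters, rather than via a product-of-spines description.
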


\begin{proof}[Sketch proof]
We apply the Fiber Lemma. There is a poset map  $\phi \colon \dL \to \dF'$ given by taking a chain $c \in \dL$ to the subchain spanned by trees in $\dF$. Let $c=T_0<T_1<\cdots<T_k$ be a chain in $\dF'$. Every point in  $d \in \phi^{-1}(c)$ is obtained by adding a nontrivial chain $T_{k+1}<T_{k+2} < \cdots < T_l$ to $c$ with each $T_i \in \X$. Points in $\phi^{-1}(c_\leq)$ are obtained by taking such a $d$ and making a further refinement $d'$ to possibly include more trees in $\dF$. The map $d' \to d$ given by forgetting the additional points in $\dF$ is a monotone retraction that induces a homotopy equivalence between $\phi^{-1}(c_\leq)$ and $\phi^{-1}(c)$ (see also Lemma~7.1(1) of \cite{BG}). 

Let $i \geq k+1$. The graph $T_k/F_N$ is obtained from $T_i /F_N$ by collapsing a subgraph. As $T_i \in \X$ and $T_k \not \in \X$, at least one component of this subgraph has nontrivial fundamental group. Therefore there exists $T_k<T_i'<T_i$ such that $T_i'$ is a \emph{core extension} of $T_k$ -- the collapsed edges of $T_i'/F_N$ form a core graph (a graph with no valence one vertices). If each $T_i'$ is chosen maximally, the map taking the chain $T_{k+1}<T_{k+2}< \cdots < T_l$ to $T_{k+1}'<T_{k+2}'< \cdots < T_l'$ gives a monotone retraction from $\phi^{-1}(c)$ to the poset of core extensions of $T_k$. This is shown to be contractible in Proposition~7.2 of \cite{BG}. Therefore $\phi$ is a homotopy equivalence by the Fiber Lemma.

The map $\phi$ restricts to a map $\psi \colon \dK \to \dFr'$. The same argument as above shows that this is a homotopy equivalence, using Lemma~7.1(3) and Proposition~7.3(2)  of \cite{BG} in place of Lemma~7.1(1) and Proposition~7.2 of \cite{BG}.

\end{proof}

\begin{remark}\label{r:jewel}
The complex $\K$ is a variation on the Jewel space introduced by Bux--Smillie--Vogtmann. The original Jewel space is made out of larger polytopes rather than simplices. The combinatorial version  appearing here is convenient for arguments involving posets, but one can see from the figures that taking the barycentric subdivision requires using many more simplices compared to the polyhedral jewels appearing in, e.g. Figure~1 and Figure~2 of \cite{BSV}. The boundary $\dJ$ of Jewel space is homotopy equivalent to $\dFr$ by \cite{Vogtmann2}, therefore $\dJ$ is homotopy equivalent to $\dK$ by Theorem~\ref{t:bg}.
\end{remark}

\subsection{The thickenings $\Lm$ and $\K$ are homotopy CM}

In \cite{Vogtmann}, Vogtmann showed that the spine of Outer space, and its reduced version, are homotopy CM complexes. We sketch below how her work shows that this is also true for $\Lm$ and $\K$. Very roughly, the link of a vertex $T$ in the spine decomposes as a join with collapses of $T$ on one side and blow-ups of $T$ on the other. Vogtmann has to carefully analyse both directions in her work;  we will see that the `blow-up' part of this analysis is also seen in links in $\Lm$ and $\K$, whereas the `descending' part of the link is considerably easier for us to analyse. Other than quoting different results from Vogtmann's paper, the proofs are the same for both the reduced and unreduced versions.

\begin{lemma}
If $p \in \Lm$ and $p = T_0 < T_1 < \cdots < T_k$ then $\hei_{\Lm}(p)=k$. Equivalently, height is the same in both $\Lm$ and $\FS'$. The same result holds for $\K$ and $\FSr'$.
\end{lemma}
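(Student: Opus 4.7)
My plan is to prove equality by the two obvious inequalities. First, since $\Lm$ is by construction a full subposet of $\FS'$, any chain below $p$ with all elements in $\Lm$ is automatically a chain below $p$ in $\FS'$, and therefore $\hei_\Lm(p) \leq \hei_{\FS'}(p)$. So it suffices to show both that $\hei_{\FS'}(p) = k$ and that this maximum is already realised by a chain inside $\Lm$.

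Computing $\hei_{\FS'}(p)$ is a direct unwinding of definitions: the elements of $\FS'$ below $p$ are precisely the proper subchains of $T_0 < T_1 < \cdots < T_k$, ordered by inclusion. These form (the non-empty-proper-subsets portion of) a Boolean lattice on $k+1$ atoms, and the longest strictly increasing sequence of such subchains has one subchain of each size $1, 2, \ldots, k$. Together with $p$ itself this gives a chain of $k+1$ elements in $\lesseq{p}$, i.e.\ a $k$-simplex of $|\FS'|$ with $p$ as top vertex, so $\hei_{\FS'}(p) = k$.

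For the matching lower bound I would exhibit an explicit chain of this length whose every vertex lies in $\Lm$. The natural candidate is the sequence of top-weighted subchains
\[ q_i := (T_{k-i} < T_{k-i+1} < \cdots < T_k), \qquad i = 0, 1, \ldots, k-1, \]
which strictly refine one another and all strictly refine into $p$. The key point is that $\max(q_i) = T_k$ for every $i$, and $T_k = \max(p) \in \X$ by the hypothesis $p \in \Lm$; hence each $q_i$ satisfies the defining condition of $\Lm$. This produces a chain $q_0 < q_1 < \cdots < q_{k-1} < p$ in $\Lm$ witnessing $\hei_\Lm(p) \geq k$.

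The argument for $\K$ and $\FSr'$ is verbatim the same, with $\X$ replaced by $\Xr$. I do not anticipate any real obstacle: the content of the lemma is the useful but essentially combinatorial observation that membership in $\Lm$ is controlled only by the maximum of a chain, so one can always shrink a chain toward its top element without leaving $\Lm$, and the Boolean lattice of subchains of $p$ supplies all the needed refinements.
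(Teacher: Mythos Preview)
Your proof is correct and follows essentially the same approach as the paper: bound the height above by the length of the longest chain of proper subchains of $p$, and exhibit an explicit chain of that length lying in $\Lm$. Your choice of witnessing subchains $q_i = (T_{k-i} < \cdots < T_k)$ is in fact cleaner than the paper's $(T_1) < (T_1<T_2) < \cdots$, since yours all have $\max(q_i)=T_k\in\X$ and hence certainly lie in $\Lm$, whereas the paper's chain tacitly requires the intermediate $T_i$ to lie in $\X$.
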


\begin{proof}
The maximal chain-of-chains \[(T_1) < (T_{1}<T_2) < \cdots < (T_1< \cdots < T_k)\] in $\less{p}$ witnesses $\hei_{\Lm}(p)=k$.
\end{proof}

Hence we can read off height from chain length. As a consequence we will omit the subscript in the height function in the sequel.

\begin{proposition}\label{p:downwards_link}
    Let $p \in \Lm$. If $p \in \dL$ then $\less{p}$ is contractible, otherwise $\less{p}$ is homeomorphic to a $(\hei(p)-1)$--dimensional sphere.
    
      Let $p \in \K$. If $p \in \dK$ then $\less{p}$ is contractible, otherwise $\less{p}$ is homeomorphic to a $(\hei(p)-1)$--dimensional sphere.  
\end{proposition}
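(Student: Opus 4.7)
The plan is to reduce the problem to a purely combinatorial statement about subsets of $\{0,1,\ldots,k\}$ and then handle the two cases separately via the barycentric subdivision of a simplex boundary and a monotone retraction.

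First I would unwind the definitions. Write $p = T_0 < T_1 < \cdots < T_k$. A point $c \in \less{p}$ is a proper nonempty subchain of $p$, i.e.\ $c$ corresponds to a nonempty proper subset $S \subsetneq \{0,1,\ldots,k\}$ via $c = \{T_i\}_{i \in S}$, and $c$ lies in $\Lm$ precisely when the final tree of $c$ lies in $\X$, i.e.\ $T_{\max S} \in \X$. The crucial observation is that $\X$ is \emph{upward-closed} in $\FS$: if $T \in \X$ and $T < T'$, then any $F_N$-stabilizer of a vertex $v' \in T'$ also stabilizes the image of $v'$ under the collapse $T' \to T$, which is trivial by freeness, so $T' \in \X$. (The same argument gives that $\Xr$ is upward-closed in $\FSr$, using that blow-ups cannot create separating edges.) Consequently, the set $I = \{i : T_i \in \X\}$ is a suffix $\{m, m+1, \ldots, k\}$ of $\{0,\ldots,k\}$, and the condition $T_{\max S} \in \X$ is equivalent to $\max S \geq m$. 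Moreover $m = 0$ precisely when $T_0 \in \X$, i.e.\ when $p \notin \dL$, and $m \geq 1$ precisely when $p \in \dL$. This identifies $\less{p}$ with the poset
\[ P_m = \{\, S \subsetneq \{0,\ldots,k\} : S \neq \emptyset,\ \max S \geq m\,\}, \]
ordered by inclusion.

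In the case $p \notin \dL$, so $m=0$, the poset $P_0$ is the full poset of nonempty proper subsets of a $(k+1)$-element set. Its geometric realization is, by definition, the barycentric subdivision of the boundary of the $k$-simplex on vertex set $\{0,\ldots,k\}$, which is homeomorphic to $S^{k-1} = S^{\hei(p)-1}$, as desired.

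In the case $p \in \dL$, so $m \geq 1$, I would define a poset map
\[ r \co P_m \to P_m, \qquad r(S) = S \cap \{m, m+1, \ldots, k\}. \]
This is well-defined since $\max S \geq m$ forces $S \cap \{m,\ldots,k\} \neq \emptyset$, and $S \cap \{m,\ldots,k\} \subseteq \{m,\ldots,k\} \subsetneq \{0,\ldots,k\}$ because $m \geq 1$. It satisfies $r(S) \subseteq S$, is monotone, and $r^2 = r$, so by Corollary~\ref{c:quillen} it is a homotopy equivalence onto its image, the poset of nonempty subsets of $\{m,\ldots,k\}$. The latter has $\{m,\ldots,k\}$ as a maximum element, hence is a cone, hence contractible. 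This gives contractibility of $\less{p}$.

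The proof for $\K$ is identical: the only inputs used about $\Lm$ were that $\X$ is upward-closed in $\FS$ and that $\Lm$-membership of a subchain is governed by its maximum; both properties hold verbatim for $\Xr \subseteq \FSr$ and $\K$. There is no substantive obstacle here --- the main care needed is in verifying the upward-closedness of $\X$ (and $\Xr$), since $\X$ is \emph{not} downward-closed (blowing up non-free vertex groups into free splittings can produce free splittings above non-free ones), so the combinatorics of which $T_i$'s lie in $\X$ must really be argued rather than assumed.
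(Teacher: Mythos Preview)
Your proof is correct and follows essentially the same route as the paper. Both arguments identify $\less{p}$ with proper subchains of $p$ that remain in $\Lm$, observe that in the non-boundary case this is the full boundary of a simplex, and in the boundary case apply Corollary~\ref{c:quillen} to the monotone retraction that intersects a subchain with its $\X$-part; the paper phrases this last map as the restriction of the global retraction $\chi$ from Proposition~\ref{p:retract}, while you write it out explicitly in terms of subsets, but it is the same map. Your explicit verification that $\X$ is upward-closed in $\FS$ is a detail the paper leaves implicit; note, however, that for $\Xr \subset \FSr$ the relevant point is simply that $\Xr = \X \cap \FSr$, so upward-closedness of $\X$ in $\FS$ already suffices and no separate argument about separating edges is needed.
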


\begin{proof}
We give the proof for $\Lm$ and omit the proof for $\K$. Suppose $p \in \Lm$ and let $p = T_0 < T_1 < \cdots < T_k$. If $p \not \in \dL$, then $T_0$, and hence each blow-up $T_i$, belongs to $\X$. Hence every subchain of $p$ is an element of $\X$. It follows that $\less{p}$ is the poset of all proper subchains of $p$, so is isomorphic to the boundary of a $\hei(p)$--simplex. 
    
    If $p \in \dL$ then $T_0 \in \dF$. Let $T_j$ be the first element of the chain lying in $\X$. The subchains of $T_j<T_{j+1}< \cdots <T_k$ span a (full) simplex $\Delta$ in $\less{p}$. Furthermore, the monotone retraction $\chi:\Lm \to \Lm$ given in Proposition~\ref{p:retract} restricts to a retraction $r \colon \less{p} \to \Delta$, which is a homotopy equivalence by Corollary~\ref{c:quillen}. Hence $\less{p}$ is contractible.
\end{proof}

\begin{proposition}
Suppose $p,p' \in \Lm$ with $p<p'$. Then $(p,p')$ is homeomorphic to a $(\hei(p')-\hei(p)-2)$--dimensional sphere.

Suppose $p,p' \in \K$ with $p<p'$. Then $(p,p')$ is homeomorphic to a $(\hei(p')-\hei(p)-2)$--dimensional sphere.
\end{proposition}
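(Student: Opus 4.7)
My plan is to show that the $\Lm$-constraint becomes automatic on intervals of this form, so that $(p,p')$ reduces to an ordinary interval in the chain poset $\FS'$. The key step I would first prove is that the spine $\X$ is \emph{upward-closed} in $\FS$: if $T \in \X$ and $T' > T$ in $\FS$, then $T' \in \X$. In graph-of-groups language, $T \in \X$ means $T/F_N$ has trivial vertex groups, and a refinement $T' > T$ amounts to blowing up these vertex groups as graphs of groups with trivial edge groups. Any such blow-up of a trivial vertex group must itself be a tree with trivial vertex groups (a graph of groups with trivial edge groups and trivial fundamental group can only have trivial vertex groups), so $T' \in \X$. The same reasoning gives that $\Xr$ is upward-closed in $\FSr$.

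Given upward-closure, let $p = T_0 < \cdots < T_k$ and $p < p'$ in $\Lm$. Any chain $q$ with $p \subsetneq q \subsetneq p'$ contains $T_k \in \X$, so $\max(q) \geq T_k$ in the $\FS$-order; upward-closure then yields $\max(q) \in \X$, so $q \in \Lm$ automatically. Hence the interval $(p,p')$ in $\Lm$ coincides with the interval $(p,p')$ in $\FS'$, which is the poset of proper nonempty subsets of $p' \setminus p$ ordered by inclusion.

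Setting $m = |p' \setminus p| = \hei(p') - \hei(p)$, this is the face poset of $\partial\Delta^{m-1}$, whose geometric realization is the barycentric subdivision of $\partial\Delta^{m-1}$, homeomorphic to the $(m-2)$-sphere. The argument for $\K$ is identical, with $\Xr$ and $\FSr'$ replacing $\X$ and $\FS'$ throughout. The only step requiring real justification is the upward-closure of $\X$, which is a structural fact about free splittings; once it is in hand, the rest is immediate combinatorics and in fact yields a homeomorphism, not merely sphericity.
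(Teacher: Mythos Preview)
Your proof is correct and follows essentially the same combinatorial route as the paper: identify $(p,p')$ with the poset of proper nonempty subsets of $p'\setminus p$, hence with the boundary of a simplex of the right dimension. The one thing you add is the explicit verification that $\X$ is upward-closed in $\FS$, so that every chain strictly between $p$ and $p'$ automatically lies in $\Lm$; the paper simply asserts that ``the chains between $p$ and $p'$ are determined by proper subsets of these $k-l$ elements'' without isolating this step, presumably because upward closure of the spine is a standard fact.
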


\begin{proof}
Again we give the proof only for $\Lm$. Suppose $p,p' \in \Lm$. If $p'=T_0 < T_1 <\ldots < T_k$ then $p= T_{i_0} <T_{i_1} <  \ldots< T_{i_l}$, with $0\leq i_j \leq k$ for $1 \leq j \leq l$.  In this case, $\hei(p')-\hei(p)=k-l$. 

As there are $k-l$ elements of $p'$ that are not in $p$, the chains between $p$ and $p'$ are determined by proper subsets of these $k-l$ elements. As a poset this is isomorphic to the boundary of a $k-l-1$ simplex. Hence $(p,p')$ is homeomorphic to a sphere of dimension $k-l-2=\hei(p')-\hei(p)-2$.
\end{proof}

\begin{proposition}
    Suppose $p \in \Lm$. Then $p_<$ is homeomorphic to a wedge of spheres of dimension $3N-5-\hei(p)$.
    
   Suppose $p \in \K$. Then $p_<$ is homeomorphic to a wedge of spheres of dimension $3N-5-\hei(p)$.\end{proposition}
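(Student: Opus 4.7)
The plan is to decompose $p_<$ as a join of subposets whose homotopy types we can control. Let $p = T_0 < T_1 < \cdots < T_k$, so that $T_k \in \X$ by the definition of $\Lm$; for $T \in \FS$, let $e(T)$ denote the number of edges in the quotient graph $T/F_N$.

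First I would check that $p_<^{\Lm} = p_<^{\FS'}$, so that we may use the link decomposition in the ambient order complex. The inclusion $\subseteq$ is clear; conversely, if $q$ is any chain in $\FS$ properly refining $p$ then $\max(q) \geq T_k \in \X$, and since $T_k$ has trivial vertex groups, every refinement of $T_k$ in $\FS$ again has trivial vertex groups and hence lies in $\X$. Thus $\max(q) \in \X$ and $q \in \Lm$, so the two upper links agree. Combining this with the link formula~\eqref{e:join}, applied to $p$ viewed as a $k$-simplex of the order complex $|\FS|$, yields
\begin{equation*}
	p_< \;\simeq\; \less{T_0} \ast (T_0,T_1) \ast \cdots \ast (T_{k-1},T_k) \ast (T_k)_<,
\end{equation*}
where every subposet on the right is taken inside $\FS$. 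The same argument applies verbatim to $\K$, with $\Xr$ in place of $\X$.

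I would then identify the homotopy type of each factor. For $T \in \FS$, elements of $\less{T}$ correspond bijectively to proper nonempty subsets of the edges of $T/F_N$ (each such subset records which edges are collapsed), and the resulting poset is the face poset of $\partial\Delta^{e(T)-1}$; hence $|\less{T}| \cong S^{e(T)-2}$. The same reasoning identifies $(T_{i-1},T_i)$ with $\partial\Delta^{e(T_i)-e(T_{i-1})-1} \cong S^{e(T_i)-e(T_{i-1})-2}$, parametrized by proper nonempty subsets of the edges present in $T_i/F_N$ but not in $T_{i-1}/F_N$. The final factor $(T_k)_<$ equals the up-set of $T_k$ in $\X$ by the observation that refinements of $T_k$ remain in $\X$; Vogtmann's homotopy CM theorem for $\X$, together with Proposition~\ref{p:homotopy_CM_conditions} and the identity $\hei_\X(T_k) = e(T_k)-N$, then shows that $(T_k)_<$ is $(3N-4-e(T_k))$-spherical.

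To conclude, combine the factors using the join-of-spheres identity $S^a \ast S^b \simeq S^{a+b+1}$, with the conventions $S^{-1} = \emptyset$ and $\emptyset \ast X \simeq X$ to accommodate empty factors (which occur precisely when a consecutive edge difference is $1$, or when $T_k$ is a maximal splitting in $\X$). The total dimension is
\begin{equation*}
(e(T_0)-2)+\sum_{i=1}^k\bigl(e(T_i)-e(T_{i-1})-2\bigr)+(3N-4-e(T_k))+(k+1) \;=\; 3N-5-k,
\end{equation*}
which matches $3N-5-\hei(p)$ since $\hei_\Lm(p) = k$. The argument for $\K$ is identical, invoking Vogtmann's homotopy CM result for $\Xr$ in place of $\X$. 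The principal nontrivial step is the sphericality of the final factor, which is genuinely Vogtmann's homotopy CM theorem; the other factors are purely combinatorial.
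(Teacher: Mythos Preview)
Your proof is correct and follows essentially the same route as the paper: both decompose $p_<$ (via its identification with the barycentric subdivision of the link of the simplex $T_0<\cdots<T_k$ in $|\FS|$) as the join in~\eqref{e:join2}, identify the intermediate factors as boundaries of simplices, and invoke Vogtmann's results for the blow-up factor $(T_k)_<$. The only cosmetic difference is that you phrase the dimension of the final factor as $3N-4-e(T_k)$ via Proposition~\ref{p:homotopy_CM_conditions} and $\hei_{\X}(T_k)=e(T_k)-N$, whereas the paper cites the specific corollaries in \cite{Vogtmann} to get $2N-V_k-3$ and then uses the Euler characteristic relation $E_k-V_k=N-1$ at the end; these are of course the same number.
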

\begin{proof}

As before, we assume $p \in \Lm$ and will make it clear where the proof differs for $p \in \K$. Suppose $\hei(p)=k$, so that $p=T_0 < T_1 < \cdots < T_k$ for some sequence of trees in $\FS$.  As $p_<$ consists of superchains of $p$, it is isomorphic to the barycentric subdivision of
\begin{equation} \less{(T_0)} \ast (T_0,T_1) \ast (T_2,T_3) \ast \cdots \ast (T_k)_<, \label{e:join2}\end{equation} where these intervals are in $\FS$ (each factor in this join corresponds to a place where new elements can be added to the chain). Let $E_i$ denote the number edges in the graph $T_i/F_N$. As $\less{(T_0)}$ consists of trees made by collapsing some, but not all, of the edge orbits in $T_0$, it is homeomorphic to the boundary of an $E_0-1$ simplex, and is an ($E_0-2$)-sphere. Similarly, $(T_{i-1},T_{i})$ is isomorphic to the poset of proper subsets of the edge orbits in $T_{i}$ that are collapsed when passing to $T_{i-1}$, so is an $(E_i-E_{i-1}-2)$-sphere.

Let $V_k$ be the number of vertices of $T_k/F_N$. The poset $(T_k)_<$ is equal to the poset of blow-ups of $T_k/F_N$, which is homotopy equivalent to a wedge of spheres of dimension $2N-V_k-3$ (when $p \in \Lm$ this is shown in the proof of \cite[Corollary~2.7]{Vogtmann}, when $p \in \K$ this is \cite[Corollary~3.2]{Vogtmann}). Putting together all of the above using Equation~\eqref{e:join2}, and noting that there are $k+2$ terms in the join, we have shown that $\less{p}$ is homotopy equivalent to a wedge of spheres of dimension equal to\begin{align*} (E_0-2) +\sum_{i=1}^k(E_i-E_{i-1}-2) + &(2N-V_k-3) +(k+1)\\ &=2N +(E_k-V_k+1)-5-k\\ &= 3N-5-\hei(p). \end{align*}
Here we used the Euler characteristic formula $E-V=N-1$.
\end{proof}

\begin{theorem}\label{t:Jewel_space_is_CM}
The thickened spine $\Lm$ is a $3N-4$-dimensional homotopy CM complex. A simplex $\sigma \in \Lm$ is in the boundary $\dL$ if and only if the local homology of $\Lm$ vanishes at $\sigma$. 

The thickened reduced  spine $\K$ is a $3N-4$-dimensional homotopy CM complex. A simplex $\sigma \in \K$ is in the boundary $\dK$ if and only if the local homology of $\K$ vanishes at $\sigma$. 
\end{theorem}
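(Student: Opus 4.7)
The plan is to feed the three preceding propositions directly into Proposition~\ref{p:homotopy_CM_conditions}. The sphericality (or, when $p \in \dL$, contractibility) of $\less{p}$, together with the sphericality statements for the intervals $(p,p')$ and the upward links $p_<$, match the three local bullet points of that proposition exactly. The only remaining condition is the global one, namely that $\Lm$ itself is $(3N-4)$-spherical; this is immediate, since $\Lm$ is contractible by Proposition~\ref{p:retract}, and contractibility is allowed in the $n$-spherical definition. The dimension bound $3N-4$ is the maximal chain length in $\FS$ and is also visible in the upward-link dimension formula, which yields $3N-5$ when $\hei(p)=0$. The argument for $\K$ will proceed line for line, relying on the reduced versions of the quoted results from \cite{Vogtmann}.

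To pin down exactly where the local homology vanishes, my main tool is the join decomposition \eqref{e:join}: for a simplex $\sigma = p_0 < p_1 < \cdots < p_k$ of $\Lm$,
\[\lk(\sigma) \;=\; \less{p_0} * (p_0,p_1) * \cdots * (p_{k-1},p_k) * (p_k)_<,\]
combined with the principle that any join with a contractible factor is contractible. First I would note that $\dF$ is closed downward in $\FS$: collapsing edges only enlarges vertex stabilizers, so collapsing a tree on which $F_N$ acts non-freely still yields a tree on which $F_N$ acts non-freely. Since each $p_i$ is a subchain of $p_{i+1}$, one has $\min(p_0) \geq \min(p_1) \geq \cdots \geq \min(p_k)$ in $\FS$, so $\min(p_0) \in \dF$ forces every $\min(p_i) \in \dF$. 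Using that $\dL$ is a full subcomplex, this yields the clean equivalence $\sigma \in \dL$ iff $p_0 \in \dL$ iff $\min(p_0) \in \dF$.

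The two cases then break apart. If $p_0 \in \dL$, Proposition~\ref{p:downwards_link} makes $\less{p_0}$ contractible, hence the whole join is contractible and the local homology of $\Lm$ at $\sigma$ vanishes. If $p_0 \notin \dL$, every factor in the join is a non-trivial wedge of spheres of the dimension recorded in the preceding propositions; summing dimensions exactly as in the proof of the upward-link proposition produces a wedge of spheres of dimension $3N - 5 - \dim(\sigma)$, which is the correct link dimension for a $(3N-4)$-dimensional CM complex and gives non-trivial local homology concentrated in degree $3N-4$. The main point worth double-checking is the handling of the degenerate factors (when $\hei(p_0)=0$, when consecutive $p_i$'s differ by a single element, or when $p_k$ is maximal in $\FS$), so that such factors are $S^{-1}$ rather than something contractible; with the convention $S^a * S^{-1} \simeq S^a$ the dimension count still yields the expected sphere, and one relies on the fact, provided by \cite{Vogtmann}, that the blow-up posets of graphs in the (reduced) spine are genuinely non-trivially spherical so that the non-vanishing conclusion really holds.
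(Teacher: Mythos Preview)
Your proposal is correct and follows essentially the same approach as the paper: feed the three link propositions and the contractibility of $\Lm$ into Proposition~\ref{p:homotopy_CM_conditions} to get the CM condition, then use the join decomposition \eqref{e:join} together with Proposition~\ref{p:downwards_link} to split the boundary characterisation into the two cases $p_0 \in \dL$ and $p_0 \notin \dL$. Your justification that $\sigma \in \dL \iff p_0 \in \dL$ via downward-closedness of $\dF$ in $\FS$ is exactly the paper's assertion that ``$\dL$ is upwards-closed in $\Lm$'', with the supporting detail (collapsing enlarges stabilizers) that the paper leaves implicit; likewise your remarks on the degenerate $S^{-1}$ join factors and on the genuine non-triviality of Vogtmann's blow-up posets are points the paper passes over in a single clause.
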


\begin{proof}
We again omit the proof for $\K$. The homotopy CM condition for $\Lm$ follows from Proposition~\ref{p:homotopy_CM_conditions} and the work above. Suppose $\sigma=p_0<p_1< \cdots < p_k$ is a simplex in $\Lm$. The link of $\sigma$ decomposes as a join as described in Equation~\eqref{e:join}. In the work above, these join components are always nontrivial wedge products of spheres unless $p_0 \in \dL$, in which case $\less{(p_0)}$ is shown to be contractible in Proposition~\ref{p:downwards_link}. Therefore the link of $\sigma$ is homotopy equivalent to a nontrivial wedge of spheres unless $p_0 \in \dL$, when the link is contractible. As $\dL$ is upwards-closed in $\Lm$ the element $p_0 \in \dL$ if and only if $p_i \in \dL$ for all $i$, which is if and only if $\sigma \in \dL$. 
\end{proof}

\subsection{The dualizing module of $\out(F_N)$}

We are now in a position to prove Theorem~\ref{t:out} from the introduction, which for convenience we repeat below.

\begin{theorem}
Let $\X$ be the spine of Outer space and let $\Xr$ be the spine of reduced Outer space. The dualizing module of $\out(F_N)$ is isomorphic to the groups \begin{align*} H_0(\X,h^{2N-3})&\cong H^{2N-3}_c(\X,\mathbb{Z}) , \text{ and}\\
H_0(\Xr,h_r^{2N-3}) &\cong H^{2N-3}_c(\Xr,\mathbb{Z}) , \end{align*} where $h^{2N-3}$ and $h_r^{2N-3}$ are the degree $2N-3$ local cohomology cosheaves of $\X$ and $\Xr$, respectively. In contrast, the homology of the simplicial boundary of (reduced) Outer space satisfies
\begin{align*} H_{3N-5-k}(\dF;\mathbb{Z}) &\cong H^k_c(\X;h_*(\CV)|_{\X}),\text{ and}\\ H_{3N-5-k}(\dFr;\mathbb{Z}) &\cong H^k_c(\X;h_*(\CVr)|_{\Xr}), \end{align*} where $h_*(\CV)|_{\X}$ and and $h_*(\CVr)|_{\Xr}$  are the restrictions of the local homology sheaf of Outer space (and reduced Outer space) to the spine (and its reduced version, respectively).\end{theorem}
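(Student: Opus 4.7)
The first pair of isomorphisms, for both the unreduced and reduced spines, is essentially immediate from Corollary~\ref{c:homological_D} combined with CM duality. The plan is: pass to a torsion-free finite-index subgroup $\Gamma\leq\out(F_N)$, whose dualizing module agrees with that of $\out(F_N)$ as an $\out(F_N)$-module by virtue of $\out(F_N)$ being a virtual duality group. By Vogtmann \cite{Vogtmann}, the spine $\X$ is locally finite and locally CM of dimension $2N-3$, is contractible, and $\Gamma$ acts freely and cocompactly on $\X$ with $\cd(\Gamma)=2N-3=\dim(\X)$. Corollary~\ref{c:homological_D}, applied with $n=d=2N-3$, therefore yields $D\cong H_0(\X,h^{2N-3})$. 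The second isomorphism $H_0(\X,h^{2N-3})\cong H^{2N-3}_c(\X,\mathbb{Z})$ is the third line of CM duality (Theorem~\ref{t:duality}) specialized to $k=n=2N-3$, i.e.\ capping with the fundamental class $[\X]$. The argument for $\Xr$ is identical, using Vogtmann's parallel result that $\Xr$ is locally CM of dimension $2N-3$.

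\textbf{Second part: the boundary of the free splitting complex.} The plan for the isomorphism $H_{3N-5-k}(\dF;\mathbb{Z})\cong H^k_c(\X;h_*(\CV)|_\X)$ is to apply relative CM duality (Theorem~\ref{t:dualitytheorem}) to the pair $(\Lm,\X')$. By Theorem~\ref{t:Jewel_space_is_CM}, the thickened spine $\Lm$ is a locally finite homology CM complex of dimension $n=3N-4$. Directly from definitions, the vertex complement of $\X'\subset\Lm$ is exactly $\dL$: a chain $c$ is a vertex of $\Lm$ outside $\X'$ precisely when some entry of $c$ lies in $\dF$, equivalently when $\min(c)\in\dF$, which is the defining condition for $\dL$. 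Applying the first isomorphism of Theorem~\ref{t:dualitytheorem} with $L=\X'$ and $L^{\text{vc}}=\dL$ gives
\[H^k_c(\X';h_{3N-4}(\Lm)|_{\X'})\xrightarrow{\cong} H_{3N-4-k}(\Lm,\dL;\mathbb{Z}).\]
Because $\Lm$ is contractible (Proposition~\ref{p:retract}), the long exact sequence of the pair $(\Lm,\dL)$ yields $H_{3N-4-k}(\Lm,\dL)\cong\tilde H_{3N-5-k}(\dL)$, and Theorem~\ref{t:bg} identifies $\dL\simeq\dF$. Hence the right-hand side becomes $\tilde H_{3N-5-k}(\dF;\mathbb{Z})$. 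The reduced case is identical, using $\K$, $\Xr'$, $\dK\simeq\dFr$, and Theorem~\ref{t:Jewel_space_is_CM} for $\K$.

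\textbf{Main obstacle.} The step requiring real care is the identification of coefficient sheaves: I must show that $h_{3N-4}(\Lm)|_{\X'}$ coincides with $h_*(\CV)|_\X$ under the natural identification $\X\cong\X'$. Morally, $\Lm$ is a combinatorial thickening of $\X$ whose local structure matches Outer space around points of the spine: refinements in $\Lm$ of a chain $c\in\X'$ correspond to chains of blow-ups in $\FS$ that terminate in a free splitting, which is exactly the local data of $\CV$ at the geometric simplex spanned by $c$. The plan is to verify at the level of links, using the join decomposition in equation~\eqref{e:join}, that $\lk_\Lm(c)$ is naturally homotopy equivalent to the link in $\CV$ of the simplex represented by $c$ and that this identification intertwines the face maps (restriction-to-star maps) on both sides, giving an isomorphism of cosheaves on $\X\cong\X'$. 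A secondary technical point is to check the orientation condition in Theorem~\ref{t:dualitytheorem}: since no chain $c\in\dL$ can lie below any chain $c'\in\X'$ in the refinement order on $\Lm$ (any refinement of such a $c$ still contains $\min(c)\in\dF$), the orientation placing $\dL$ before $\X'$ is compatible with the simplex structure.
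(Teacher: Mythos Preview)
Your proposal is correct and follows essentially the same route as the paper. The one place where the paper is more direct is the coefficient-sheaf identification you flag as the ``main obstacle'': rather than matching links simplex by simplex, the paper simply observes that the interior of $\Lm$ is homeomorphic to Outer space, so the local homology of $\Lm$ at simplices of $\X'$ is literally that of $\CV$, giving $h_*(\Lm)|_{\X'}=h_*(\CV)|_{\X}$ in one line. Your link-based verification would also work but is more labour than necessary. (A small slip: your orientation check correctly shows that $\dL$ is upward-closed in $\Lm$, hence in any simplex the $\X'$-vertices precede the $\dL$-vertices; the compatible orientation therefore places $\X'=L$ before $\dL=L\vc$, not the other way around as you wrote.)
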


\begin{proof}
As $\out(F_N)$ is a virtual duality group of dimension $2N-3$ and and the (reduced) spine is a $2N-3$--dimensional CM complex by \cite{Vogtmann}, the first pair of isomorphisms follow from Corollary~\ref{c:homological_D} in the introduction. By Theorem~\ref{t:Jewel_space_is_CM}, the thickened spine $\Lm$ is homotopy CM. Furthermore, every vertex of $\Lm$ either belongs to $\X'$ or $\dL$. Hence we can apply the relative CM duality of Section~\ref{s:relative_CM} with $X=\Lm$, $L=\X'$ and $L^\textnormal{vc}=\dL$. The first isomorphism in the relative CM duality theorem gives \[H^k_c(\X;h_*(\mathcal{\Lm})|_{\X}) \cong H_{3N-4-k}(\Lm,\dL;\mathbb{Z}).\] The interior of the thickened spine is homeomorphic to Outer space, so $h_*(\mathcal{\Lm})|_{\X}=h_*(\CV)|_{\X}$. As $\Lm$ is contractible, $H_{3N-4-k}(\Lm,\dL;\mathbb{Z})\cong H_{3N-5-k}(\dL;\mathbb{Z})$ by the long exact sequence in homology. Finally $\dL$ is homotopy equivalent to $\dF$ by Theorem~\ref{t:bg}. The same argument goes through for the final isomorphism, using $\K$ in place of $\Lm$.
\end{proof}

\begin{remark}[Related Work]
In \cite{LV}, Lazarev and Voronov use Verdier duality along with similar sheaf-theoretic methods to reprove results of Kontsevich relating graph cohomology to the rational cohomology of $\out(F_N)$. There, CM duality is manifested via the Verdier dual sheaf associated to a cyclic operad $\mathcal{O}$ being concentrated in a single degree (see \cite[Corollary~3.12]{LV} and its preceding paragraph).  Similar statements can be made by applying CM duality to the quotient $Y_N=\X/\out(F_N)$ of the spine. Indeed, the chain complex that computes homology of $Y_N$ with coefficients in the local cohomology cosheaf shares a lot in common with the \emph{forested graph complex} introduced by Conant and Vogtmann in their paper on Kontsevich's work \cite{ConantV}. These previous papers focus on the quotient $Y_N$, whereas we have been focusing on the spine itself.

In \cite{Himes}, the authors show that the homology of the free factor complex for $\aut(F_N)$ is not the same as the dualizing module of $\aut(F_N)$ when $N=5$. The free factor complex is expected to have a different homotopy type to the bordifications studied above (this is discussed in more detail in \cite{BG}).
\end{remark}

\section{Bounding the top-dimensional cohomology of $\out(F_N)$}\label{s:bound}

 In this section, we see how cosheaf homology gives a resolution for the dualizing module of $\out(F_N)$, and use this to prove the following:

\begin{theorem}\label{t:top_dim_bound}
Let $\rho \in \Xr$ be a rose in the spine of reduced outer space, and let $\stab(\rho) <\out(F_N)$ be the stabilizer of this rose. Then \[ \dim(H^{2N-3}(\out(F_N);\mathbb{Q})) \leq \dim((h^{2N-3}(\rho)\otimes \mathbb{Q})_{\stab(\rho)}),\]
where $h^{2N-3}(\rho)$ is the local cohomology of the reduced spine at this rose.
\end{theorem}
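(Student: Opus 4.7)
The plan is to combine Bieri--Eckmann duality with Theorem~\ref{t:out} to express the top-dimensional rational cohomology as $\out(F_N)$-coinvariants of the cokernel computing the dualizing module via cosheaf homology, and then to collapse the resulting sum over vertex orbits to the rose orbit.

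Since $\out(F_N)$ is a virtual duality group of dimension $2N-3$, passing to a torsion-free finite-index subgroup and applying Bieri--Eckmann duality with trivial rational coefficients, together with a transfer argument to return to $\out(F_N)$, identifies
\[ H^{2N-3}(\out(F_N);\mathbb{Q}) \cong (D\otimes\mathbb{Q})_{\out(F_N)}, \]
where $D$ is the dualizing module. By Theorem~\ref{t:out}, $D\cong H_0(\Xr;h_r^{2N-3})$, and by definition of cosheaf homology this is the cokernel of the boundary map $\partial_1\colon C_1(\Xr;h_r^{2N-3})\to C_0(\Xr;h_r^{2N-3})$. Rationalisation and $\out(F_N)$-coinvariants are both right exact and $\out(F_N)$ acts cocompactly on $\Xr$, so I obtain a surjection
\[ \bigoplus_{[v]\in\Xr_0/\out(F_N)}\bigl(h_r^{2N-3}(v)\otimes\mathbb{Q}\bigr)_{\stab(v)} \twoheadrightarrow (D\otimes\mathbb{Q})_{\out(F_N)}. \]

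To finish, I would observe the classical fact that all roses in $\Xr$ form a single $\out(F_N)$-orbit (any permutation and orientation-reversal of the petals is realised by an outer automorphism), and prove the vanishing $(h_r^{2N-3}(v)\otimes\mathbb{Q})_{\stab(v)}=0$ for every non-rose vertex $v$. Together these reduce the sum on the left to the single rose summand $(h^{2N-3}(\rho)\otimes\mathbb{Q})_{\stab(\rho)}$, yielding the claimed inequality.

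The main obstacle is this second vanishing. My approach would be to use the join decomposition $\lk(v) = \less{v} * v_<$ from Section~\ref{s:poset_preliminaries} together with the standard identification of the top local cohomology $h_r^{2N-3}(v)$ with the dual of $\tilde{H}_{2N-4}(\lk(v))$. When $v$ is not a rose, $\less{v}$ is a non-trivial spherical complex, namely the order complex of proper admissible forest collapses of $v/F_N$, on which the graph-automorphism subgroup of $\stab(v)$ acts by permuting forests. Via the K\"unneth formula for joins, the vanishing of the $\stab(v)$-coinvariants of $h_r^{2N-3}(v)\otimes\mathbb{Q}$ reduces to the representation-theoretic statement that $\stab(v)$ has no invariant vector in $\tilde{H}_{\mathrm{top}}(\less{v})\otimes\mathbb{Q}$. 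The model case is the theta graph in rank two, where the action factors through the standard $2$-dimensional representation of $S_3$ on $\tilde{H}_0$ of three discrete points. In general the top-dimensional chambers of the forest complex are permuted transitively up to sign by the graph-automorphism group, and I would exhibit $\tilde{H}_{\mathrm{top}}(\less{v})\otimes\mathbb{Q}$ as a sum of non-trivial irreducible $\stab(v)$-representations, completing the argument with a case analysis for the most degenerate non-rose graphs.
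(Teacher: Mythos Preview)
Your proposal has a genuine gap: the vanishing $(h_r^{2N-3}(v)\otimes\mathbb{Q})_{\stab(v)}=0$ for non-rose vertices $v$ is false in general. The stabilizer of a marked graph $v$ in $\out(F_N)$ is isomorphic to the automorphism group of the underlying graph $v/F_N$, and there exist trivalent bridgeless graphs with trivial automorphism group --- the Frucht graph on twelve vertices is one, giving a maximal vertex of $\Xr$ for $N=7$. For such $v$ the coinvariants are all of $h_r^{2N-3}(v)\otimes\mathbb{Q}$. Now if $\Gamma\in\Xr$ and $F$ is any forest in $\Gamma$, then $\Gamma/F$ again lies in $\Xr$: collapsing a non-loop edge only raises valences, and an edge $e\notin F$ is a bridge of $\Gamma/F$ if and only if $(\Gamma\smallsetminus e)/F$ is disconnected, hence if and only if $\Gamma\smallsetminus e$ is disconnected, which never happens since $\Gamma$ is bridgeless. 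Thus $\less{v}$ in $\Xr$ is the full poset of nonempty forests of $v/F_N$, i.e.\ the independence complex of its cycle matroid; this matroid has no coloops, so the complex is a \emph{nontrivial} wedge of top spheres and $h_r^{2N-3}(v)\neq 0$. Your transitivity heuristic --- that graph automorphisms permute the top cells of the forest complex transitively up to sign --- fails outright here, as the Frucht graph has many spanning trees and only the identity automorphism. (Even for vertices with large stabilizer your K\"unneth reduction is suspect: vanishing of invariants in one tensor factor does not force vanishing in the tensor product.)

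The paper sidesteps this entirely. Rather than taking coinvariants of all of $C_0(\Xr;h_r^{2N-3})$, it exploits the explicit cap-product formula: capping the fundamental class against a top cochain $\alpha^*$ yields a $0$-chain supported at the \emph{initial} vertex $\alpha_0$ of $\alpha$ in the poset ordering. Since every maximal chain in $\Xr$ begins at a rose, the isomorphism $H^{2N-3}_c(\Xr;\Z)\to H_0(\Xr;h_r^{2N-3})$ is realised by classes lying in the rose summand $R=\bigoplus_{\rho\text{ rose}}h_r^{2N-3}(\rho)$. Hence $D\cong R/(\im\partial_1\cap R)$, and taking coinvariants gives a surjection $R_{\out(F_N)}\twoheadrightarrow D_{\out(F_N)}$. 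As $\out(F_N)$ acts transitively on roses, Shapiro's lemma identifies $R_{\out(F_N)}\cong h_r^{2N-3}(\rho)_{\stab(\rho)}$, and the dimension inequality follows without any vanishing claim for the non-rose orbits.
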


Recall that a \emph{rose} is a point $T$ in the spine such that $T/F_N$ is a graph with one vertex and $N$ loops, and we use $M_G$ to denote the group of coinvariants of a $G$-module $M$. As $\out(F_N)$ is a virtual duality group, it is a  duality group of dimension $2N-3$ over $\mathbb{Q}$ \cite[Theorem~9.9]{Bierinotes}. In particular this implies:
$$
H^{2N-3}(\out(F_N);\mathbb{Q}) \cong H_0(\out(F_N);D \otimes \mathbb{Q})=(D \otimes \mathbb{Q})_{\out(F_N)},
$$
where $D$ is the dualizing module of $\out(F_N)$.
To prove Theorem~\ref{t:top_dim_bound}, we will show that with integer coefficients one has
\begin{equation}\label{e:Dinvsubloccoh}
    D_{\out(F_N)} \subseteq h^{2N-3}(\rho)_{\stab(\rho)},
\end{equation}
which implies the dimensional inequality after passing to rational coefficients.
As above, the dualizing module of $\out(F_N)$ is  \[ D=H^{2N-3}_c(\Xr;\mathbb{Z}). \] As we are in the top dimension, the coboundary map is trivial so $H^{2N-3}_c(\Xr;\mathbb{Z})$ is generated by cohomology classes of the form $[\alpha^*]$, as $\alpha$ varies over the top-dimensional simplices of $\Xr$. Viewing $G=\out(F_N)$ as acting on $\Xr$ on the left, this gives $H^{2N-3}_c(\Xr;\mathbb{Z})$ the usual right $G$-module structure by \[ [\alpha^*]g=[g^{-1}(\alpha)^*].\]By \cite{Vogtmann}, the spine is locally Cohen--Macaulay, so we can apply CM duality (Theorem~\ref{t:dualitytheorem}) to obtain an isomorphism \[[\Xr]\cap - \colon H^{k}_c(\Xr;\mathbb{Z}) \xrightarrow{\cong} H_{2N-3-k}(\Xr;h^{2N-3}), \] where $h^{2N-3}$ is the local cohomology cosheaf of the spine and $[\Xr] \in H^{\text{lf}}_{2N-3}(\Xr;h^{2N-3})$ is the fundamental class. As $H^*_c(\Xr;\mathbb{Z})$ is concentrated in degree $2N-3$,  $H_*(\Xr;h^{2N-3})$ is concentrated in degree zero, so its chain complex is acyclic. This gives a resolution
\begin{equation}\label{e:resolutionofD}
    \cdots \to C_1(\Xr;h^{2N-3}) \to C_0(\Xr;h^{2N-3}) \to D\to 0
\end{equation} of the dualizing module. The modules in this chain complex are (see Equation~\eqref{eq:cpltlysuppsheafcoh}) \[ C_k(\Xr;h^{2N-3})=\oplus_{\sigma \in (\Xr)_k} h^{2N-3}(\sigma), \] where $h^{2N-3}(\sigma)=H^{2N-3}(\Xr,\Xr-\ostar(\sigma))$ is the local homology at $\sigma$. 
To distinguish where local homology classes live, for $\phi \in h^{2N-3}(\sigma)$ we use $\sigma \under \phi$ to denote the corresponding element of $C_n(\Xr;h^{2N-3})$ in the $h^{2N-3}(\sigma)$-summand. 

At the chain level, the cap product with the fundamental class $[\Xr]$ given in \cite{WW1} is the $G$-equivariant (see \cite[Section 5]{WW1}) map \[ c_{2N-3}\colon C^{2N-3}(\Xr;\mathbb{Z}) \to C_0(\Xr;h^{2N-3})\] given by \begin{equation} \label{e:cap_on_maximal_simplex}  c_{2N-3}(\alpha^*)=\alpha_0 \under [\alpha^*],\end{equation} where $\alpha_0$ is the first vertex of $\alpha$ (using the ordering/orientation on $\Xr$ induced by the poset structure), and $\alpha_0 \under [\alpha^*]$ lives in the local cohomology at the vertex $\alpha_0$. The quickest way to see this is via Equation~(14) on page 13 of \cite{WW1}, noting that the fundamental class $[\Xr]$ is represented by the infinite sum $\sum{\alpha \under [\alpha^*]}$, as $\alpha$ ranges over all maximal simplices. In the poset structure we use on the spine every maximal simplex $\alpha$ is given by a chain \[ T_0 < T_1 < \cdots < T_{2N-3},\] where the initial vertex $T_0$ of $\alpha$ is a rose. We can leverage this to restrict the image of $c_{2N-3}$:

\begin{prop}\label{p:classes_have_a_rep_in_R}
Let $R$ be the submodule of $C_0(\Xr;h^{2N-3})$ spanned by the local cohomology groups of the roses in $\Xr$. Every element of $D=C_0(\Xr;\mathbb{Z})/\im{d_1}$ has a representative in $R$ .   
\end{prop}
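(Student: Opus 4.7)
The plan is to exploit the chain-level description of the CM duality isomorphism and read off an explicit representative directly from \eqref{e:cap_on_maximal_simplex}. CM duality (Theorem~\ref{t:dualitytheorem}) identifies $D = H^{2N-3}_c(\Xr;\mathbb{Z})$ with $H_0(\Xr;h^{2N-3}) = C_0(\Xr;h^{2N-3})/\im(d_1)$, and this identification is realized on representatives by the chain-level map $c_{2N-3}\colon C^{2N-3}_c(\Xr;\mathbb{Z}) \to C_0(\Xr;h^{2N-3})$. So to prove the proposition it suffices to check that every class in $H^{2N-3}_c(\Xr;\mathbb{Z})$ admits a cocycle representative whose image under $c_{2N-3}$ is supported on roses.

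First I would observe that, since $\Xr$ has dimension $2N-3$, the cochain group in degree $2N-2$ vanishes, and consequently every element of $C^{2N-3}_c(\Xr;\mathbb{Z})$ is automatically a cocycle. Therefore every class in $D$ is represented by some finite $\mathbb{Z}$-linear combination $\sum_i n_i \alpha_i^*$ where the $\alpha_i$ range over maximal simplices of $\Xr$.

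Next, applying \eqref{e:cap_on_maximal_simplex} termwise gives the corresponding representative in $C_0(\Xr;h^{2N-3})$:
\[
c_{2N-3}\Bigl(\sum_i n_i \alpha_i^*\Bigr) = \sum_i n_i \, (\alpha_i)_0 \under [\alpha_i^*],
\]
supported on the initial vertices $(\alpha_i)_0$ of the simplices $\alpha_i$. It remains only to verify that each $(\alpha_i)_0$ is a rose. This is exactly the content of the observation made in the paragraph just before the proposition: under the identification of $\Xr$ with the order complex of the collapse poset of the reduced spine, each maximal simplex $\alpha_i$ corresponds to a maximal chain $T_0 < T_1 < \cdots < T_{2N-3}$, and its initial vertex in the induced orientation is $T_0$; maximality of the chain forces $T_0$ to be a minimal element of the poset, and these minimal elements are precisely the roses. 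Thus $\sum_i n_i (\alpha_i)_0 \under [\alpha_i^*]$ already lies in $R$.

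I do not anticipate any real obstacle to this argument, because the two ingredients required are both recorded in the excerpt: the explicit chain-level formula \eqref{e:cap_on_maximal_simplex} for the duality cap product on top-dimensional generators, and the geometric fact that a maximal chain in the reduced spine starts at a rose. The proposition is essentially a bookkeeping consequence of assembling these two facts, with the only subtlety being the (standard) orientation convention on $\Xr$ that places $T_0$ as the front face of the simplex $\alpha$.
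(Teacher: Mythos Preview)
Your proposal is correct and follows essentially the same approach as the paper: the paper's proof also invokes the CM duality isomorphism induced by $c_{2N-3}$, observes via \eqref{e:cap_on_maximal_simplex} that its image lies in $R$ because the first vertex of any maximal simplex is a rose, and concludes that $H_0(\Xr;h^{2N-3})$ is generated by classes in $R$. Your version is simply a more detailed unpacking of the same argument.
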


\begin{proof}
The cap product $c_{2N-3}$ induces an isomorphism between $H^{2N-3}_c(\Xr;\mathbb{Z})$ and $H_0(\Xr;h^{2N-3})$. The image of $c_{2N-3}$ is contained in $R$ by \eqref{e:cap_on_maximal_simplex}, as the first vertex $\alpha_0$ of any maximal simplex $\alpha$ is a rose. In particular, $H_0(\Xr;h^{2N-3})$ is generated by the classes in $R$.
\end{proof}

\begin{corollary}\label{c:dualizing_module_as_quotient_of_R}
Let $R$ be the submodule of $C_0(\Xr;h^{2N-3})$ spanned by the local cohomology groups of the roses in $\Xr$, and let \[d_1:C_1(\Xr;h^{2N-3})\to C_0(\Xr;h^{2N-3})\] be the boundary map in cosheaf homology. Then \[ D \cong R/(\im d_1 \cap R).\]
\end{corollary}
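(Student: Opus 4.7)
The plan is to recognise this as a direct bookkeeping consequence of Proposition~\ref{p:classes_have_a_rep_in_R} combined with the resolution in Equation~\eqref{e:resolutionofD}. There is essentially only one diagram to write down, and nothing computational to grind through, so I expect no real obstacle: the work has already been done in identifying $D$ with $H_0(\Xr;h^{2N-3})$ via CM duality and in showing that every degree-zero cosheaf homology class has a representative supported on roses.

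First I would unpack the resolution \eqref{e:resolutionofD}: it tells us that $D \cong C_0(\Xr;h^{2N-3})/\im d_1$, so the quotient map $q\colon C_0(\Xr;h^{2N-3})\twoheadrightarrow D$ is surjective with kernel $\im d_1$. Composing with the inclusion $\iota\colon R\hookrightarrow C_0(\Xr;h^{2N-3})$ produces a homomorphism
\[
q\circ\iota\colon R\longrightarrow D.
\]
Proposition~\ref{p:classes_have_a_rep_in_R} says exactly that every element of $D$ is hit by some element of $R$, so $q\circ \iota$ is surjective. Its kernel consists of those $x\in R$ with $\iota(x)\in\ker q = \im d_1$, which is $R\cap \im d_1$ by definition of intersection inside $C_0(\Xr;h^{2N-3})$.

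Finally I would invoke the first isomorphism theorem to conclude
\[
D \;\cong\; R/(R\cap \im d_1),
\]
which is the statement of the corollary. The only subtle point worth flagging is that $R$ is genuinely a submodule of $C_0(\Xr;h^{2N-3})$ because the latter is the direct sum of the local cohomology groups at the vertices, so no identifications need to be tracked between the rose summands and the non-rose summands. No further input from CM duality is needed at this stage beyond what Proposition~\ref{p:classes_have_a_rep_in_R} already encapsulates.
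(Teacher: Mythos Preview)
Your proposal is correct and follows essentially the same approach as the paper: the paper also uses $D\cong C_0(\Xr;h^{2N-3})/\im d_1$, observes that the inclusion of $R$ induces an injection $R/(\im d_1\cap R)\hookrightarrow D$, and then invokes Proposition~\ref{p:classes_have_a_rep_in_R} for surjectivity. The only cosmetic difference is that the paper phrases it via the induced injection (second isomorphism theorem) rather than via $q\circ\iota$ and the first isomorphism theorem.
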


\begin{proof}
By CM dualiy, \[ D\cong H_0(\Xr;h^{2N-3})\cong C_0(\Xr;h^{2N-3})/\im d_1, \] and the inclusion $R \subset C_0(\Xr;h^{2N-3})$ induces an injection \[ R/(\im d_1 \cap R) \hookrightarrow C_0(\Xr;h^{2N-3})/\im d_1 \cong D.\] By Proposition \ref{p:classes_have_a_rep_in_R}, every element of $D$ has a representative in $R$, so this map is also surjective.
\end{proof}
    
\begin{proposition}
Let $R$ be as above and let $I$ be the augmentation ideal of the group ring of $\out(F_N)$. If $\rho$ is a rose in $\Xr$ and $\stab(\rho)$ is its stabilizer in $\out(F_N)$ then 
\[ R/RI =R_{\out(F_N)} \cong h^{2N-3}(\rho)_{\stab(\rho)}.\]
\end{proposition}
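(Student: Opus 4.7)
The proof amounts to recognizing $R$ as an induced module and then invoking Shapiro's lemma in degree zero.

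The first equality $R/RI = R_{\out(F_N)}$ is just the definition of coinvariants for a right $\out(F_N)$-module; under the convention $[\alpha^*]g = [g^{-1}(\alpha)^*]$ recalled earlier, $R$ inherits a right module structure from $C_0(\Xr;h^{2N-3})$, and the $\out(F_N)$-action preserves $R$ because it permutes the roses among themselves.

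For the second equality, the key geometric input is that $\out(F_N)$ acts transitively on the set of roses in $\Xr$. A vertex of the (reduced) spine is a marked graph, and a rose is such a marked graph whose underlying graph has a single vertex and $N$ loops. Any two markings of a given rose graph differ by precomposition with an element of $\aut(F_N)$, so the set of roses is $\out(F_N)$-equivariantly identified with the coset space $\stab(\rho)\backslash\out(F_N)$. First I would make this identification explicit by choosing a set of representatives $\{g_i\}$ for $\stab(\rho)\backslash\out(F_N)$ and identifying each summand $h^{2N-3}(\rho\cdot g_i^{-1})$ of $R$ with $h^{2N-3}(\rho)$ via the action of $g_i$. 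This gives an isomorphism of right $\out(F_N)$-modules
$$ R \;\cong\; h^{2N-3}(\rho)\otimes_{\mathbb{Z}[\stab(\rho)]}\mathbb{Z}[\out(F_N)], $$
i.e.\ $R$ is the induction up to $\out(F_N)$ of $h^{2N-3}(\rho)$, regarded as a $\stab(\rho)$-module via the stabilizer's natural action on the link of $\rho$ in $\Xr$.

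Finally, the general identity $\bigl(M\otimes_{\mathbb{Z}[H]}\mathbb{Z}[G]\bigr)_G \cong M_H$ (Shapiro's lemma in degree zero) applied to $M = h^{2N-3}(\rho)$ and $H = \stab(\rho)$ yields the desired isomorphism $R_{\out(F_N)} \cong h^{2N-3}(\rho)_{\stab(\rho)}$.

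The main obstacle, such as it is, is purely bookkeeping: one must check that the $\stab(\rho)$-action on $h^{2N-3}(\rho)$ obtained by restricting the $\out(F_N)$-action on $R$ matches the $\stab(\rho)$-action on local cohomology coming from the stabilizer's action on the link of $\rho$. This compatibility is a consequence of the equivariance of the local cohomology cosheaf construction, and once conventions for left versus right actions are pinned down, the statement is purely formal.
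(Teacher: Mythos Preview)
Your proof is correct and follows essentially the same route as the paper: recognize $R$ as the induced module $\Ind^{\out(F_N)}_{\stab(\rho)} h^{2N-3}(\rho)$ (using transitivity of the action on roses), then apply Shapiro's lemma in degree zero. Your version is somewhat more detailed in justifying transitivity and checking compatibility of the stabilizer action, but the argument is the same.
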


\begin{proof}
As $R$ decomposes as a direct summand over the $h^{2N-3}(\rho)$ and $\out(F_N)$ acts transitively on the summands, it follows that \[ R=\Ind^{\out(F_N)}_{\stab(\rho)}h^{2N-3}(\rho), \] i.e. $R$ is the induced representation of the $\stab(\rho)$ representation $h^{2N-3}(\rho)$ (see e.g. Proposition III.5.3 in \cite{Brown}). Then \[H_*(\out(F_N);R)\cong H_*(\stab(\rho);h^{2N-3}(\rho))\] by Shapiro's lemma \cite[Proposition~III.6.2]{Brown}. The result follows by taking $*=0$.
\end{proof}

\begin{proof}[Proof of Theorem~\ref{t:top_dim_bound}]
By Corollary~\ref{c:dualizing_module_as_quotient_of_R}, we have \[ D\cong \frac{R}{\im d_1 \cap R},\] therefore the coinvariants of this module are given by 
\[D_{\out(F_N)} \cong \frac{R}{\im d_1 \cap R + RI}\subset \frac{R}{RI}\cong h^{2N-3}(\rho)_{\stab(\rho)}  \]
Theorem~\ref{t:top_dim_bound} is then a consequence of $\out(F_N)$ being a duality group over $\mathbb{Q}$ with rational dualizing module $D \otimes \mathbb{Q}$.
\end{proof}

\begin{remark}
We finish this section with some remarks.
\begin{itemize}
\item To show that the inclusion from Equation~\eqref{e:Dinvsubloccoh} is actually an isomorphism, it follows from the proof that it is enough to show that $RI$ contains  $\im d_1 \cap R$. This is definitely false in the unreduced case, and seems unlikely to be true even in the reduced setting.
\item The representation theory of the $\stab(\rho)$ action on the cohomology of the link (or equivalently, the local cohomology) is well understood in terms of \emph{tree representations} of the symmetric group. See \cite{Barcelo} and  \cite{RWtree}. For instance, in the unreduced setting the dimension of $h^{2N-3}(\rho)$ is $(2N-2)!$ However, the structure of the link is much less well-understood in the reduced spine.
\item While we suspect it may be possible to prove Theorem~\ref{t:top_dim_bound} without using CM duality, the theorem was only formulated after a careful consideration of the the resolution of the dualizing module obtained in Equation~\eqref{e:resolutionofD}. Furthermore, we found that the cap product in CM duality facilitated a particularly clean proof of Proposition~\ref{p:classes_have_a_rep_in_R}.
\end{itemize}
\end{remark}

\section{2-complexes, visible irreducibility and the local homology sheaf} \label{s:examples2}

In dimension two, links of points are homeomorphic to graphs, and we will see that the local Cohen--Macaulay condition is equivalent to the graphs that appear as vertex links being connected and nondegenerate (not a single point). A related condition called \emph{visible irreducibility}, appears in work of Louder and Wilton \cite{LW} (reformulated as below in \cite[Remark~4.2]{Wilton}) . Informally this says that there are no obvious local simplifications one can perform on the 2-complex. More precisely, a complex is \emph{visibly irreducible} if:

\begin{enumerate}[(i)]
    \item every vertex link contains at least one edge;
    \item  every vertex link is connected;
    \item vertex links do not contain leaves; and
    \item vertex links do not have cut vertices.
\end{enumerate}

Otherwise, a complex is \emph{visibly reducible}. We now turn to Theorem~\ref{t:visible_irreducibility} from the introduction, which is restated below.

\begin{theorem}
Let $Y$ be a simplicial 2-complex. Then $Y$ is a local homology CM complex if and only if it satisfies (i) and (ii). Furthermore, the following are equivalent:
\begin{itemize}
\item $Y$ is visibly irreducible.
\item $Y$ is a local homology CM complex and the restriction maps of the local homology sheaf are surjective.
\item $Y$ is a local homology CM complex and the corestriction maps of the local cohomology cosheaf are injective.
\end{itemize}
\end{theorem}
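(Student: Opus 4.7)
The plan is to compute local (co)homology explicitly in terms of simplex links and then translate sheaf-theoretic conditions into graph theory. For a $d$-simplex $\sigma$ in $Y$ the relative chain complex $C^\sigma_\bullet(Y)$ is, up to an index shift, the augmented chain complex of $\lk \sigma$, giving $h_k(\sigma) \cong \tilde H_{k-d-1}(\lk \sigma)$. In the 2-dimensional setting this specialises to $h_k(v) = \tilde H_{k-1}(\lk v)$, $h_k(e) = \tilde H_{k-2}(\lk e)$, and $h_k(\tau) = \mathbb{Z}\delta_{k,2}$, so local CM in dimension 2 amounts to $\lk v$ being connected and nonempty at every vertex together with $\lk e$ nonempty at every edge. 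Condition (ii) is exactly connectedness, and (i)+(ii) together force the edge condition because a connected graph containing an edge has no isolated vertices, so each $w\in\lk v$ is incident to an edge of $\lk v$, producing a triangle on $[v,w]$. Conversely CM gives (ii) directly, and since CM forces $v$ to lie on some edge (hence on a triangle), $\lk v$ contains an edge, yielding (i).

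For the second equivalence, assume CM. The edge-to-triangle map $h_2(e)\to h_2(\tau)=\mathbb{Z}$ is projection from $\tilde H_0(\lk e)$ onto the coordinate indexed by the vertex of $\lk e$ determined by $\tau$, hence always surjective. The vertex-to-triangle map $H_1(\lk v)\to\mathbb{Z}$ records the coefficient of the edge $[x,y]$ of $\lk v$ corresponding to $\tau=[v,x,y]$ in a 1-cycle, so it is surjective iff $[x,y]$ is not a bridge of $\lk v$. For the vertex-to-edge map with $e=[v,w]$, I would identify the kernel of $C^v_\bullet \twoheadrightarrow C^e_\bullet$ with a shift of the augmented chain complex of $\lk v - \ostar(w) \simeq \lk v - w$, producing the long exact sequence
\[\tilde H_1(\lk v - w) \to H_1(\lk v) \to \tilde H_0(\lk_{\lk v}(w)) \to \tilde H_0(\lk v - w) \to 0,\]
whose middle arrow is the restriction; by exactness it is surjective iff the neighbours of $w$ in $\lk v$ all lie in the same component of $\lk v - w$. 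Combining the three analyses, all restriction maps are surjective iff every $\lk v$ has no bridges and no cut vertices. Since every leaf produces a bridge, and any bridge with non-leaf endpoints forces both endpoints to be cut vertices, this is equivalent to ``no leaves and no cut vertices'' in every $\lk v$, i.e.\ (iii)+(iv), which together with CM is visible irreducibility.

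For the third equivalence, CM ensures that each $h_2(\sigma)$ is finitely generated free abelian (a kernel inside a finitely generated free abelian group), so universal coefficients give $h^2(\sigma)\cong \hom(h_2(\sigma),\mathbb{Z})$ and the corestrictions are the duals of the restrictions. Inspecting the three cases above, the cokernel of each restriction is itself free abelian --- zero for edge-to-triangle, a subgroup of $\tilde H_0(\lk v - w)$ for vertex-to-edge, and $0$ or $\mathbb{Z}$ for vertex-to-triangle --- and for a homomorphism between finitely generated free abelian groups with free cokernel, surjectivity of the map is equivalent to injectivity of its dual. The main technical obstacle is the vertex-to-edge analysis: carrying out the chain-level comparison that identifies the restriction $h_2(v) \to h_2(e)$ with the connecting map of the pair $(\lk v, \lk v - \ostar(w))$, and verifying freeness of its cokernel for the duality step. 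Once that identification is in place, the graph-theoretic translation through bridges and cut vertices, and the universal coefficients argument for the cosheaf version, are essentially formal.
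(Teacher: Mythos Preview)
Your approach is essentially the paper's: compute local homology via links, translate restriction maps into graph-theoretic conditions on vertex links, and invoke duality for the cosheaf statement. There is one genuine slip and one unnecessary detour worth flagging.

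The slip is the claim that the edge-to-triangle restriction $h_2(e)\to h_2(\tau)=\mathbb{Z}$ is ``always surjective.'' This fails exactly when $e$ lies in a single triangle: then $\tilde H_0(\lk e)=0$ and the map to $\mathbb{Z}$ is zero. In link language this is precisely the presence of a leaf in some $\lk v$, i.e.\ the failure of (iii). Your argument survives anyway, because your vertex-to-triangle analysis yields ``no bridges,'' and you later note that every leaf produces a bridge; so ``no bridges'' already forces ``no leaves'' and hence edge-to-triangle surjectivity. But the reasoning as written is broken at that line.

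The paper takes the cleaner route and avoids bridges entirely: it shows edge-to-triangle surjectivity is equivalent to (iii) (no leaves) and vertex-to-edge surjectivity to (iv) (no cut vertices), giving visible irreducibility directly. Your vertex-to-triangle analysis is correct but redundant once the edge-to-triangle case is handled properly, since surjectivity of the composite follows from surjectivity of the two factors. On the other hand, your treatment of the cosheaf equivalence is more careful than the paper's: you explicitly verify that each restriction has torsion-free cokernel (zero, $\mathbb{Z}$, or a subgroup of a reduced $H_0$), which is exactly what is needed to deduce ``restriction surjective'' from ``dual injective.'' The paper's one-line appeal to universal coefficients glosses over this point.
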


\begin{proof}
We first show that if $Y$ is locally CM then it satisfies (i) and (ii). If $Y$ is locally CM then it is pure (all of its maximal simplices are 2-simplices), so in particular every vertex is contained in a 2-simplex and (i) holds. To see that (ii) also holds, let $V \in Y$ be a vertex, and let $\lk_Y(V)$ be the vertex link. By comparing the chain complexes $C_{\bullet}^V(Y) $ (as defined in Section~\ref{s:local_hom}) and $\tilde{C}_{\bullet-1}(\lk_Y(V))$ (that computes the reduced homology of the link) we see that $h_k(V)\cong \tilde{H}_{k-1}(\lk_Y(V))$: every adjacent edge $E$ at $V$ determines a vertex $v_E \in \lk_Y(V)$ and 2-simplices containing $V$ give edges in $\lk_Y(V)$. If (ii) does not hold, there exists a vertex $V$ such that $\lk_Y(V)$ is a disconnected graph, so that $h_1(V)=\tilde{H}_{0}(\lk_Y(V))$ is nontrivial. So $Y$ being locally CM also implies (ii).

Now assume that $Y$ satisfies (i) and (ii). We want to to show that the local homology $h_*(\sigma)$ is concentrated in degree 2 for every simplex $\sigma\in Y$. We start with the vertices: by (i) and (ii) each vertex link is a non-empty connected graph with at least one edge, so $\tilde{H}_*(\lk_Y(V)) $ is concentrated in degree 1, and therefore $h_*(V) \cong \tilde{H}_{*-1}(\lk_Y(V))$ is concentrated in degree 2. Moving on to the edges, $h_*(E)$ is concentrated in degree two if and only if $E$ is contained in a face. If this is not the case, $E$ will be an isolated vertex in each of its vertex links, contradicting (ii) (as there is at least one edge in each vertex link by (i)). The local homology at each 2-simplex $\sigma$ satisfies $h_2(\sigma)=\mathbb{Z}$ and $h_k(\sigma)=0$ for $k \neq 2$ by maximality. Putting all of the above together, $Y$ is a local homology CM complex if and only if the vertex links of $Y$ satisfy (i) and (ii).

We now move on the equivalence of the bullet points. As there is no torsion in local (co)homology when the complex is locally CM, equivalence of the second and third bullet points follows from the universal coefficient theorem. Therefore we will focus on the local homology sheaf. Firstly, if $F$ is a 2--simplex in $Y$ and $E$ is an edge of $F$, the map $h_2(E) \to h_2(F)$ is surjective if and only if the local homology $h_2(E)$ does not vanish. This happens if and only if $E$ is contained in at least two 2--simplices. This in turn is equivalent to vertex links not containing leaves.  

Now suppose that $Y$ satisfies (i), (ii) and (iii). As above let $V$ be a vertex of an edge $E \in Y$.  The local homology in $Y$ at $E$ only considers 2--simplices containing $E$, so in terms of the notation above we have \[ h_2(E)\cong H_1(\Gamma,\Gamma-\ostar_\Gamma(v_E)), \] where $\ostar_\Gamma(v_E)$ is the open star of $v_E$ in the vertex link $\Gamma$. Furthermore, one can check that the map $h_2(V)\to h_2(E)$ is the same as the map \[H_1(\Gamma) \to H_1(\Gamma,\Gamma-\ostar_{\Gamma}(v_E)) \] induced by the inclusion of pairs $(\Gamma,\emptyset) \to (\Gamma,\Gamma-\ostar_\Gamma(v_E))$. By considering the long exact sequence in homology, the cokernel of $h_2(E) \to h_2(V)$ is then isomorphic to \[ \tilde{H}_0(\Gamma-\ostar_\Gamma(v_E)) \] 
Hence the map $h_2(V) \to h_2(E)$ is surjective unless $\Gamma-\ostar_\Gamma(v_E)$ has more than one component, or equivalently unless $v_E$ is a cut vertex of $\lk_Y(V)=\Gamma$.
\end{proof}

\bibliography{lib}

\newcommand{\etalchar}[1]{$^{#1}$}
\begin{thebibliography}{HMNP23}
\expandafter\ifx\csname url\endcsname\relax
  \def\url#1{\texttt{#1}}\fi
\expandafter\ifx\csname doi\endcsname\relax
  \def\doi#1{\burlalt{doi:#1}{http://dx.doi.org/#1}}\fi
\expandafter\ifx\csname urlprefix\endcsname\relax\def\urlprefix{URL }\fi
\expandafter\ifx\csname href\endcsname\relax
  \def\href#1#2{#2}\fi
\expandafter\ifx\csname burlalt\endcsname\relax
  \def\burlalt#1#2{\href{#2}{#1}}\fi

\bibitem[Ata14]{Atanasov}
R.~Atanasov.
\newblock Geometric two-dimensional duality groups.
\newblock {\em Groups Geom. Dyn.}, 8(1):69--95, 2014.
\newblock \doi{10.4171/GGD/217}.

\bibitem[Bar90]{Barcelo}
H.~Barcelo.
\newblock On the action of the symmetric group on the free {L}ie algebra and
  the partition lattice.
\newblock {\em J. Combin. Theory Ser. A}, 55(1):93--129, 1990.
\newblock \doi{10.1016/0097-3165(90)90050-7}.

\bibitem[Bar16]{Bartholdi}
L.~Bartholdi.
\newblock The rational homology of the outer automorphism group of {$F_7$}.
\newblock {\em New York J. Math.}, 22:191--197, 2016.
\newblock \urlprefix\url{http://nyjm.albany.edu:8000/j/2016/22_191.html}.

\bibitem[BE73]{BE}
R.~Bieri and B.~Eckmann.
\newblock Groups with homological duality generalizing {P}oincar\'{e} duality.
\newblock {\em Invent. Math.}, 20:103--124, 1973.
\newblock \doi{10.1007/BF01404060}.

\bibitem[BF00]{BF}
M.~Bestvina and M.~Feighn.
\newblock The topology at infinity of {${\rm Out}(F_n)$}.
\newblock {\em Invent. Math.}, 140(3):651--692, 2000.
\newblock \doi{10.1007/s002220000068}.

\bibitem[BG20]{BG}
B.~Br\"{u}ck and R.~Gupta.
\newblock Homotopy type of the complex of free factors of a free group.
\newblock {\em Proc. Lond. Math. Soc. (3)}, 121(6):1737--1765, 2020.
\newblock \doi{10.1112/plms.12381}.

\bibitem[Bie81]{Bierinotes}
R.~Bieri.
\newblock {\em Homological dimension of discrete groups}.
\newblock Queen Mary College Mathematics Notes. Queen Mary College, Department
  of Pure Mathematics, London, second edition, 1981.

\bibitem[Bj{\"{o}}16]{Bjoerner}
A.~Bj{\"{o}}rner.
\newblock ``{L}et {$\Delta$} be a {C}ohen-{M}acaulay complex {$\dots$}''.
\newblock In {\em The mathematical legacy of {R}ichard {P}. {S}tanley}, pages
  105--118. Amer. Math. Soc., Providence, RI, 2016.
\newblock \doi{10.1090//mbk/100/06}.

\bibitem[Bj{\"{o}}95]{Bjo2}
A.~Bj{\"{o}}rner.
\newblock Topological methods.
\newblock In {\em Handbook of combinatorics, {V}ol. 1, 2}, pages 1819--1872.
  Elsevier Sci. B. V., Amsterdam, 1995.

\bibitem[BM01]{BM}
N.~Brady and J.~Meier.
\newblock Connectivity at infinity for right angled {A}rtin groups.
\newblock {\em Trans. Amer. Math. Soc.}, 353(1):117--132, 2001.
\newblock \doi{10.1090/S0002-9947-00-02506-X}.

\bibitem[BMP{\etalchar{+}}24]{BMPSW}
B.~Br\"{u}ck, J.~Miller, P.~Patzt, R.~J. Sroka, and J.~C.~H. Wilson.
\newblock On the codimension-two cohomology of {${\rm SL}_n(\Bbb{Z})$}.
\newblock {\em Adv. Math.}, 451:Paper No. 109795, 82, 2024.
\newblock \doi{10.1016/j.aim.2024.109795}.

\bibitem[BPS23]{BPS}
B.~Brück, P.~Patzt, and R.~J. Sroka.
\newblock A presentation of symplectic {S}teinberg modules and cohomology of
  $\operatorname{Sp}_{2n}(\mathbb{Z})$.
\newblock {\em arXiv:2306.03180}, 2023.

\bibitem[Bro82]{Brown}
K.~S. Brown.
\newblock {\em Cohomology of groups}, volume~87 of {\em Graduate Texts in
  Mathematics}.
\newblock Springer-Verlag, New York-Berlin, 1982.

\bibitem[Bro12]{Broaddus}
N.~Broaddus.
\newblock Homology of the curve complex and the {S}teinberg module of the
  mapping class group.
\newblock {\em Duke Math. J.}, 161(10):1943--1969, 2012.
\newblock \doi{10.1215/00127094-1645634}.

\bibitem[BS73]{BS}
A.~Borel and J.-P. Serre.
\newblock Corners and arithmetic groups.
\newblock {\em Comment. Math. Helv.}, 48:436--491, 1973.
\newblock \doi{10.1007/BF02566134}.

\bibitem[BSV18]{BSV}
K.-U. Bux, P.~Smillie, and K.~Vogtmann.
\newblock On the bordification of outer space.
\newblock {\em J. Lond. Math. Soc. (2)}, 98(1):12--34, 2018.
\newblock \doi{10.1112/jlms.12124}.

\bibitem[CFP12]{CFP2}
T.~Church, B.~Farb, and A.~Putman.
\newblock The rational cohomology of the mapping class group vanishes in its
  virtual cohomological dimension.
\newblock {\em Int. Math. Res. Not. IMRN}, (21):5025--5030, 2012.
\newblock \doi{10.1093/imrn/rnr208}.

\bibitem[CFP19]{CFP1}
T.~Church, B.~Farb, and A.~Putman.
\newblock Integrality in the {S}teinberg module and the top-dimensional
  cohomology of {${\rm SL}_n\mathcal{O}_K$}.
\newblock {\em Amer. J. Math.}, 141(5):1375--1419, 2019.
\newblock \doi{10.1353/ajm.2019.0036}.

\bibitem[CGP21]{CGP}
M.~Chan, S.~r. Galatius, and S.~Payne.
\newblock Tropical curves, graph complexes, and top weight cohomology of
  {$\mathcal{M}_g$}.
\newblock {\em J. Amer. Math. Soc.}, 34(2):565--594, 2021.
\newblock \doi{10.1090/jams/965}.

\bibitem[CP17]{CP}
T.~Church and A.~Putman.
\newblock The codimension-one cohomology of {${\rm SL}_n\Bbb Z$}.
\newblock {\em Geom. Topol.}, 21(2):999--1032, 2017.
\newblock \doi{10.2140/gt.2017.21.999}.

\bibitem[CV86]{CV}
M.~Culler and K.~Vogtmann.
\newblock Moduli of graphs and automorphisms of free groups.
\newblock {\em Invent. Math.}, 84(1):91--119, 1986.
\newblock \doi{10.1007/BF01388734}.

\bibitem[CV03]{ConantV}
J.~Conant and K.~Vogtmann.
\newblock On a theorem of {K}ontsevich.
\newblock {\em Algebr. Geom. Topol.}, 3:1167--1224, 2003.
\newblock \doi{10.2140/agt.2003.3.1167}.

\bibitem[Dav98]{Davis1998}
M.~W. Davis.
\newblock The cohomology of a {C}oxeter group with group ring coefficients.
\newblock {\em Duke Math. J.}, 91(2):297--314, 1998.
\newblock \doi{10.1215/S0012-7094-98-09113-X}.

\bibitem[Dav00]{Davis2000}
M.~W. Davis.
\newblock Poincar\'{e} duality groups.
\newblock In {\em Surveys on surgery theory, {V}ol. 1}, volume 145 of {\em Ann.
  of Math. Stud.}, pages 167--193. Princeton Univ. Press, Princeton, NJ, 2000.

\bibitem[Fow12]{Fowler}
J.~Fowler.
\newblock Finiteness properties for some rational {P}oincar\'{e} duality
  groups.
\newblock {\em Illinois J. Math.}, 56(2):281--299, 2012.
\newblock \urlprefix\url{http://projecteuclid.org/euclid.ijm/1385129948}.

\bibitem[Geo08]{Geoghegan}
R.~Geoghegan.
\newblock {\em Topological methods in group theory}, volume 243 of {\em
  Graduate Texts in Mathematics}.
\newblock Springer, New York, 2008.
\newblock \doi{10.1007/978-0-387-74614-2}.

\bibitem[Har86]{Harer}
J.~L. Harer.
\newblock The virtual cohomological dimension of the mapping class group of an
  orientable surface.
\newblock {\em Invent. Math.}, 84(1):157--176, 1986.
\newblock \doi{10.1007/BF01388737}.

\bibitem[HMNP23]{Himes}
Z.~Himes, J.~Miller, S.~Nariman, and A.~Putman.
\newblock The free factor complex and the dualizing module for the automorphism
  group of a free group.
\newblock {\em Int. Math. Res. Not. IMRN}, (22):19020--19068, 2023.
\newblock \doi{10.1093/imrn/rnac330}.

\bibitem[Iva87]{Ivanov}
N.~V. Ivanov.
\newblock Complexes of curves and the teichmüller modular group.
\newblock {\em Russian Mathematical Surveys}, 42(3):55, jun 1987.
\newblock \doi{10.1070/RM1987v042n03ABEH001440}.

\bibitem[LS76]{LS}
R.~Lee and R.~H. Szczarba.
\newblock On the homology and cohomology of congruence subgroups.
\newblock {\em Invent. Math.}, 33(1):15--53, 1976.
\newblock \doi{10.1007/BF01425503}.

\bibitem[LV08]{LV}
A.~Lazarev and A.~A. Voronov.
\newblock Graph homology: {K}oszul and {V}erdier duality.
\newblock {\em Adv. Math.}, 218(6):1878--1894, 2008.
\newblock \doi{10.1016/j.aim.2008.03.022}.

\bibitem[LW24]{LW}
L.~Louder and H.~Wilton.
\newblock Uniform negative immersions and the coherence of one-relator groups.
\newblock {\em Invent. Math.}, 236(2):673--712, 2024.
\newblock \doi{10.1007/s00222-024-01246-4}.

\bibitem[MPP21]{MPP}
J.~Miller, P.~Patzt, and A.~Putman.
\newblock On the top-dimensional cohomology groups of congruence subgroups of
  {${\rm SL}(n, \Bbb Z)$}.
\newblock {\em Geom. Topol.}, 25(2):999--1058, 2021.
\newblock \doi{10.2140/gt.2021.25.999}.

\bibitem[PS22]{PS}
A.~Putman and D.~Studenmund.
\newblock The dualizing module and top-dimensional cohomology group of {${\rm
  GL}_n(\mathcal{O})$}.
\newblock {\em Math. Z.}, 300(1):1--31, 2022.
\newblock \doi{10.1007/s00209-021-02769-9}.

\bibitem[Qui78]{Quillen}
D.~Quillen.
\newblock Homotopy properties of the poset of nontrivial {$p$}-subgroups of a
  group.
\newblock {\em Adv. in Math.}, 28(2):101--128, 1978.
\newblock \doi{10.1016/0001-8708(78)90058-0}.

\bibitem[RW96]{RWtree}
A.~Robinson and S.~Whitehouse.
\newblock The tree representation of {$\Sigma_{n+1}$}.
\newblock {\em J. Pure Appl. Algebra}, 111(1-3):245--253, 1996.
\newblock \doi{10.1016/0022-4049(95)00116-6}.

\bibitem[Vog90]{Vogtmann}
K.~Vogtmann.
\newblock Local structure of some {${\rm Out}(F_n)$}-complexes.
\newblock {\em Proc. Edinburgh Math. Soc. (2)}, 33(3):367--379, 1990.
\newblock \doi{10.1017/S0013091500004818}.

\bibitem[Vog24]{Vogtmann2}
K.~Vogtmann.
\newblock The boundary of bordified outer space.
\newblock {\em arXiv:2404.11371}, 2024.

\bibitem[Wal81]{Walker}
J.~W. Walker.
\newblock Homotopy type and {E}uler characteristic of partially ordered sets.
\newblock {\em European J. Combin.}, 2(4):373--384, 1981.
\newblock \doi{10.1016/S0195-6698(81)80045-5}.

\bibitem[Wal79]{Wall}
C.~Wall~(ed).
\newblock List of problems.
\newblock In {\em Homological Group Theory}, London Math. Soc. Lecture Notes
  36. Cambridge Univ. Press, 1979.

\bibitem[Wil24]{Wilton}
H.~Wilton.
\newblock Rational curvature invariants for 2-complexes.
\newblock {\em Proc. A.}, 480(2296):Paper No. 20240025, 39, 2024.

\bibitem[WW23]{WW1}
R.~D. Wade and T.~A. Wasserman.
\newblock Duality for {C}ohen-{M}acaulay complexes through combinatorial
  sheaves.
\newblock 2023.

\end{thebibliography}
\bibliographystyle{halpha-abbrv}

\end{document}